\newtheorem{theorem}{Theorem}
\newtheorem{remark}{Remark}
\newcommand{\R}{\mathbb{R}}
\renewcommand{\d}{{\mathrm{d}}}
\newcommand{\cof}{{\mathrm{cof}}}
\newcommand{\oneD}{_{\text{1D}}}
\newcommand{\twoD}{_{\text{2D}}}
\newcommand{\threeD}{_{\text{3D}}}
\newcommand{\domainVol}{{\Omega\threeD}}
\newcommand{\domainPln}{{\Omega\twoD}}
\newcommand{\imgVol}{u\threeD}
\newcommand{\imgPln}{u\twoD}
\newcommand{\done}{d_1}
\newcommand{\dtwo}{d_2}
\newcommand{\F}{{\mathcal F}}
\newcommand{\Reg}{{\mathcal R}}
\newcommand{\J}{\mathcal{J}}
\newcommand{\E}{\mathcal{E}}
\newcommand{\D}{\mathrm{D}}
\newcommand{\TVol}{\mathcal{T}_{\mathrm{3D}}}
\newcommand{\TPln}{\mathcal{T}_{\mathrm{2D}}}
\newcommand{\id}{{\mathrm{id}}}
\newcommand{\vol}{{\mathrm{vol}}}
\newcommand{\diam}{{\mathrm{diam}}}
\newcommand{\tr}{\mathrm{tr}}
\newcommand{\loc}{{\mathrm L}}
\newcommand{\nonloc}{{\mathrm{NL}}}
\newcommand{\todo}[1]{\textcolor{red}{[{\bf todo:} #1]}}
\newcommand{\notinclude}[1]{}
\title{Elastic 3D-2D Image Registration}
\author{Paul Striewski\footnote{Applied Mathematics M\"unster, \nolinkurl{Paul.Striewski@uni-muenster.de}} \and Benedikt Wirth\footnote{Applied Mathematics M\"unster, \nolinkurl{Benedikt.Wirth@uni-muenster.de}}}
\date{}
\begin{document}
\maketitle 
\begin{abstract}
We propose a method to non-rigidly align a three-dimensional (3D) volumetric image with a two-dimensional (2D) planar image representing a projection of the 
deformed volume.
The application in mind comes from biological studies in which 2D intravital microscopy videos of living tissue are recorded,
after which the tissue is excised and a more detailed 3D volume microscopy is performed.
Coregistration of both data sets allows to combine the temporal (but 2D) information with more detailed spatial 3D information.

Our approach is variational and uses a hyperelastic deformation regularization, as is appropriate for biological material.
As a particular feature, the out of plane deformation is estimated based on the out of focus blur inside the 2D microscopy image.
The approach becomes computationally feasible through the use of a coarse-to-fine optimization strategy and higher order optimization methods.
\end{abstract}

\section{Introduction}\label{Sct:Intro}
In this article we propose an elastic 3D-2D image registration technique, which is motivated by a challenging image processing problem arising in biological
studies of leucocyte extravasation into inflamed tissue in mice. A common experimental procedure consists of irritating a certain tissue region and performing 
two-dimensional intravital microscopy (IVM)
to monitor the reactions on the cellular level, see for instance \cite{Buscher2016}. After a sufficiently long video 
sequence has been recorded, the animal is sacrificed, the 
tissue is removed, stained, and fixed, and a single, volumetric 3D image is acquired using a 3D confocal microscope. The two obtained datasets therefore show 
essentially the same tissue region, one being an elastically deformed configuration of the other.
While the IVM recording shows in addition the temporal behaviour of the moving leucocytes, the confocal microscopy image contains more detailed structures such 
as the blood vessel basement membranes.
Aligning and merging both datasets allows to combine the enhanced structural information obtained by the 3D confocal microscope with temporal information 
gained from the IVM.\\

Both the 2D and 3D dataset use flourescent staining and thus possess multiple colour channels corresponding to different flourescent dyes.
In particular, one channel of both images contains the same biological structures (for instance the endothelial cell membranes) so that those can be used for 
registration.
Our approach for registering the images is based on mimicking the image acquisition system of the 2D microscope,
including a non-rigid tissue deformation and a blurring and projection step.
The intensities of the blurred and projected image volume are then compared with those of a 2D reference image (for instance a single frame of the IVM video), 
see \cref{fig:pipeline} for the overall procedure.
The goal of the registration process is then to find a deformation transforming the volumetric image in such a way that the similarity of the 
two datasets is maximal.
The set of admissible deformations is chosen in a way that reflects physical properties of the underlying tissue.\\

\begin{figure}
\center
\includegraphics[width=\textwidth]{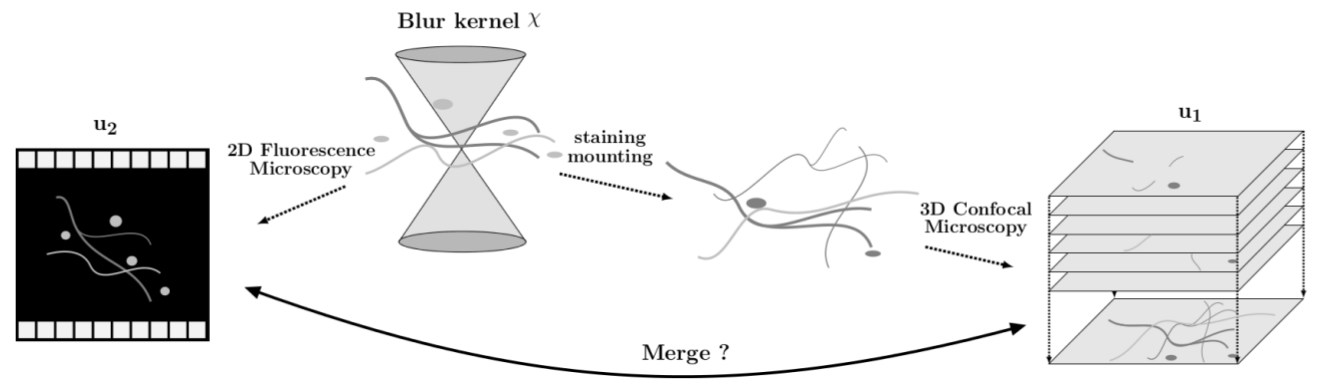}
\caption{Forward model of the relation between the 2D and 3D datasets.
Given a specimen of the tissue, 3D confocal microscopy yields a corresponding 3D image of voxels.
Taking the tissue configuration during confocal microscopy as a reference,
the 2D microscopy image is obtained by a (geometrically nonlinear) deformation and a subsequent 2D image acquisition
in which structures outside the focus plane are blurred during the projection onto the 2D image plane.}
\label{fig:pipeline}
\end{figure}

In this article, we present a complete mathematical description of the aforementioned procedure and, using the direct method in the calculus of variations, 
guarantee the existence of an admissible deformation which maximizes the similarity between the datasets. Although rigid or parameterized 3D-2D registration 
\cite{Ref:Otake,Ref:ZhengYu} and elastic 3D-3D registration \cite{Ref:Burger} has been studied before, elastic 3D-2D registration has so far not been attempted to the best of our knowledge.

Xu and Wan presented an intensity based 3D-2D registration technique for the matching of a CT volume to 2D X-Ray images and a description of how their method 
can be implemented on GPUs \cite{Ref:XuWan}. Studies on CT to X-Ray Fluoroscopy registration for guidance of surgical intervention were published by Otake et 
al. \cite{Ref:Otake} and Uneri et al.\ \cite{Ref:UneriEtAl}. All three articles employ rigid motions to obtain optimal alignment, which can be expected to 
yield satisfactory registration results for this specific type of problem. 

Also descriptions of nonrigid methods can be found in the literature. Zheng and Yu presented a spline based 3D-2D registration technique \cite{Ref:ZhengYu} for 
matching 3D CT data to 2D X-Ray images using a statistical deformation model. The article also references a number of publications which describe 
similar methods.
In contrast to our setting, in all above studies the 2D images result from a 3D volume via the X-Ray transform,
while our 2D images stem from optical microscopy and thus depict structures outside the focus plane with blur.
On the one hand this blur makes the lateral alignment of structures between the 3D and 2D image more difficult (also computationally),
on the other hand it can provide additional information about the deformation along the viewing direction,
which in the above studies must come solely from the deformation model.

Heldmann and Papenberg presented a variational method to register 3D CT data to a sequence of 2D ultrasound slices \cite{Ref:HeldmannPapenberg}.
Note that their 2D images thus stem from point- or slicewise evaluation of a 3D volume rather than from an integral transform as in our and the above 
approaches.
The well-posedness of this problem thus requires high regularity of the 3D image to be registered and the corresponding deformation, which the authors provide 
employing 
curvature regularization. They include results of the application of their technique to clinical data, however, no in depth analysis of the presented 
energy functional is presented.
Berkels et al.\ studied 2D-3D surface registration, where graph representations of cortical surfaces had to be matched to 2D brain images. Unlike in our case, 
an optimal deformation $\psi: \R^2 \to \R^3$ had to be found. For the regularization of this highly ill-posed problem, the authors suggested a 
regularizer based on the second order thin plate spline energy of $\psi$ \cite{Ref:BerkelsSurfaceRegistration}. A different, yet still related, situation can 
arise in surface to surface matching problems, if surface regions are locally described by two dimensional images which then in turn can be processed further 
by 2D-2D image matching algorithms. Such methods were for example described by Merelli et al.\ \cite{Ref:MerelliEtAl2008,Ref:MerelliEtAl2011}.

Our method will estimate displacement in viewing direction from the severity of out of focus blur.
A closely related problem, known as \emph{depth-from-defocus}, is to estimate the distance between an object and the image acquisition device
from a whole image stack (rather than just one image as in our case) that was generated by capturing the object under varying focal settings.
The forward model described by Persch et al.\ \cite{Persch2017}, which mimics the image acquisition of a thin lens camera, is comparable to our 2D microscope model.
Likewise related is the article by Aguet et al.\ \cite{AguetVille2008}, which suggests a method of how an image stack,
produced by moving a sample through the different focal planes of a 2D microscope, can be combined into a single feature enriched image.

For further methods we refer to the extensive overview \cite{Markelj2012} of 3D-2D registration techniques with a focus on applications in image guided surgery.

In the next section we present the mathematical model and the analysis of existence of solutions in detail.
\Cref{sec:numerics} describes the numerical implementation, while \cref{sec:results} shows the behaviour of the method in carefully chosen test cases and a real biological dataset.

\section{Mathematical Formulation}
\label{Sec:MathematicalFormulation}
We continue with the mathematical description of our registration problem.
We first briefly describe the forward operator of the 2D microscope and subsequently a physically reasonable model of tissue deformations.

For simplicity, our exposition will consider the 3D and 2D unit domains $\domainVol = [-1,1]^3$ and $\domainPln = [-1,1]^2$, respectively.
The volumetric image stemming from 3D confocal microscopy is denoted $\imgVol:\domainVol \to [0,\infty)$ and is considered to represent the tissue reference 
configuration.
In comparison to the IVM the quality of the 3D microscopy process is typically good enough to assume $\imgVol$ noise- and blurfree.
The 2D microscopy image $\imgPln:\domainPln\to[0,\infty)$ will be interpreted as resulting from a non-rigid deformation of $\imgVol$ and a subsequent 
projection into the plane.
Above, both $\imgVol$ and $\imgPln$ are assumed to represent only that colour channel in which the same biological structures are visible;
furthermore we will assume both images to be uniformly bounded
(which is appropriate since the image intensities can be interpreted as the local concentration of fluorescent molecules).

The forward operator of 2D microscopy can be modelled by
\begin{equation*}
\F: L^\infty(\domainVol) \to L^\infty(\domainPln)\,,\quad
(\F u)(x_1,x_2) = [\chi \ast u](x_1,x_2, 0)\,,
\end{equation*}
where the blurring kernel $\chi \in L^1(\R^3)$ has compact support and encodes the microscope's point spread function.
Above, $L^p$ denotes the standard Lebesgue function space, and $\chi\ast u$ denotes convolution (after extending $u$ by zero outside $\domainVol$).
Due to
\begin{equation*}
|[\chi\ast u](x)-[\chi\ast u](\tilde x)|
=\left|\int_{\R^3}(\chi(x-z)-\chi(\tilde x-z))u(z)\,\d z\right|
\leq\|\chi(x-\cdot)-\chi(\tilde x-\cdot)\|_{L^1}\|u\|_{L^\infty}
\to_{\tilde x\to x}0
\end{equation*}
we have $\chi\ast u\in C_c^0(\R^3)$ (the space of continuous functions with compact support)
so that $\F$ is well-defined and even maps into $C^0(\domainPln)$ (the space of continuous functions on $\domainPln$).
The value $\chi((x_1,x_2,0)-z)$ can be interpreted as the amount of light recorded at $(x_1,x_2)$ by the 2D microscope from a point source of light at $z$,
which is readily seen from $(\F u)(x_1,x_2)=\int_{\R^3}\chi((x_1,x_2,0)-z)u(z)\,\d z$.
In our model we assume the blurring kernel to be spatially independent so that the observed blur of a point $z\in\domainVol$ only depends on its height $z_3$.
A simple example (used in our calculations due to lack of a properly measured point spread function) is given by
\begin{equation*}
\chi(x)=\begin{cases}
\frac1{\pi c^2x_3^2}&\text{if }|x_3|\leq1,x_1^2+x_2^2\leq c^2x_3^2,\\
0&\text{else,}
\end{cases}
\end{equation*}
which blurs each point $z$ outside the focus plane $\R^2\times\{0\}$ uniformly onto a disc of radius $cz_3$.

\begin{remark}[Compactness of forward operator]
The previous calculations even show that $\F$ is a compact operator from $L^\infty(\domainVol)$ to $C^0(\domainPln)$
(as expected for convolutions, even though we only extract a slice from the convolution result).
Indeed, due to $|[\chi\ast u](x)-[\chi\ast u](\tilde x)|\leq\|\chi(x-\cdot)-\chi(\tilde x-\cdot)\|_{L^1}\|u\|_{L^\infty}$,
the image under $\F$ of a bounded subset $A\subset L^\infty(\domainVol)$ is equicontinuous and thus compact in $C^0(\domainPln)$ by the Arzel\`a--Ascoli 
theorem.
However, in our analysis we will not make use of this fact.
\end{remark}

\begin{remark}[Less image regularity]
With additional conditions on the blurring kernel $\chi$ one may reduce the image regularity to $\imgVol\in L^q(\domainVol)$
and obtain a continuous linear forward operator $\F:L^q(\domainVol)\to L^q(\domainPln)$ for any $q\geq1$.
Indeed, assume that $\|\chi(\cdot,\cdot,x_3)\|_{L^1}\leq C$ uniformly for almost all $x_3\in\R$ and some $C>0$ (note that this is satisfied by our example 
kernel),
then for $u\in L^q(\domainVol)$ we have
\begin{equation*}
\F u(x_1,x_2)
=\int_\R u(\cdot,\cdot,x_3)*\chi(\cdot,\cdot,-x_3)\,\d x_3
\end{equation*}
using Fubini's theorem and thus $|\F u(x_1,x_2)|^q\leq K\int_\R|u(\cdot,\cdot,x_3)*\chi(\cdot,\cdot,-x_3)|^q\,\d x_3$ by Jensen's inequality for some $K>0$ 
depending on the support of $\chi$.
Therefore, by Fubini's theorem and Young's convolution inequality we have
\begin{multline*}
\|\F u\|_{L^q}^q
=\int_\domainPln|\F u|^q(x_1,x_2)\,\d(x_1,x_2)
\leq K\int_\R\int_\domainPln|u(\cdot,\cdot,x_3)*\chi(\cdot,\cdot,-x_3)|^q\,\d(x_1,x_2)\,\d x_3\\
\leq K\int_\R\|u(\cdot,\cdot,x_3)\|_{L^q}^q\|\chi(\cdot,\cdot,-x_3)\|_{L^1}^q\,\d x_3
\leq KC^q\int_\R\|u(\cdot,\cdot,x_3)\|_{L^q}^q\,\d x_3
\leq KC^q\|u\|_{L^q}^q\,.
\end{multline*}
\end{remark}

Between the 2D and the 3D image acquisition the tissue is deformed by a deformation $y:\domainVol\to\R^3$,
where for any point $x\in\domainVol$ the value $y(x)$ shall be interpreted as the new position assumed during the 3D image acquisition.
The inverse deformation $y^{-1}$ thus moves the reference configuration back into the configuration during the 2D microscopy
so that we expect $\imgPln=\F(\imgVol\circ y)$.
However, due to noise in the image acquisition and additional artefacts not included in our model (such as diffuse background signals) one cannot expect 
equality.
Instead we shall seek a deformation $y$ that leads to a small \emph{dissimilarity measure}
\begin{equation*}
\J^d[y] = \int_{\domainPln} d\left(\F(\imgVol\circ y)(x),\, \imgPln(x)\right) \, \d x\,,
\end{equation*}
where $d:\R\times\R \to [0,\infty)$ measures the distance between two image intensities
(again $\imgVol$ is extended by zero outside $\domainVol$).
The optimal choice of $d$ is in general determined by the type of noise contained in the image data.
We will focus on the $L^1$ and $L^2$ distance measures obtained with
\begin{equation*}
 \done(a,b) = |a-b|, \quad
 \dtwo(a,b) = |a-b|^2,
\end{equation*}
respectively.
The former is well-known to be appropriate if the data contains strong outliers, while the latter is appropriate for additive Gaussian noise.
Other choices include correlation-based distance measures (see for instance \cite[Sct.\,6.1]{Ref:BookModersitzki}) or the Kullback--Leibler Divergence 
$d_{\text{KL}}(a,b)=b-a+a\log\frac ab$ in case of Poisson noise.\\

The minimization of $\J^d[y]$ for general mappings $y:\domainVol\to\R^3$ is neither physically reasonable nor well-posed.
Indeed, $y$ might be discontinuous or not even measurable so that the composition $\imgVol\circ y$ does not make sense.
Thus we have to regularize the deformation $y$ by imposing additional constraints and adding an extra energy term.
Concerning physical constraints, $y$ should neither reverse the orientation of the deformed body $\domainVol$ nor should it introduce self-intersections of the 
material.
The first condition translates to
$$\det \nabla y > 0\text{ almost everywhere,}$$
whereas the second condition is imposed by additionally demanding
\begin{equation*}\label{eqn:InjectivityRequirement}
  \int_{\domainVol} \det\, \nabla y(x)\, \d x \ \leq \vol(y(\domainVol))\,,
\end{equation*}
where $\vol$ shall denote the three-dimensional Lebesgue measure.
The advantage of this constraint is that it guarantees the injectivity of $y$ if combined with $\det \nabla y > 0$ and that it also holds for the weak limit of 
a sequence $(y_k)_k$ satisfying the constraint, see for example \cite[Thm.\,7.9-1]{Ref:CiarletElasticity}.
We shall also constrain the maximal possible displacement according to
\begin{equation*}
\|y\|_{L^\infty}\leq\diam(\domainPln)
\end{equation*}
in order not to shift the three-dimensional volume out of the visible area.
In addition to those hard constraints, unreasonable growth or shrinkage of lengths, areas, or volumes by the deformation should be penalized.
Modelling the biological tissue as an elastic material, such a penalization can be achieved by adding an elastic deformation energy
\begin{equation*}
\Reg[y]=\int_\domainVol W(\nabla y)\,\d x
\end{equation*}
as regularization,
which will prevent physically unreasonable deformations with too high elastic energy.
Here, $W:\R^{3\times3}\to[0,\infty)$ represents the \emph{stored energy function}.
Since biological tissue is very soft, it typically undergoes considerable nonlinear deformation so that the elastic model should be geometrically nonlinear 
and, in particular, rigid motion invariant.
Furthermore, due to lack of better information we may assume a homogeneous and isotropic elastic constitutive law.
Together, the above assumptions imply that the stored energy function can be written as a function of the singular values or of the invariants of its argument 
(see for example \cite[Ch. 4]{Ref:CiarletElasticity}) so that
\begin{equation*}
W(A)=\hat W(\|A\|_F,\|\cof A\|_F,\det A)
\end{equation*}
with the Frobenius norm $\|A\|_F = \sqrt{\tr(A^T A)}$ and the cofactor matrix $\cof A=\det A A^{-T}$.
Note that $\|A\|_F$, $\|\cof A\|_F$, and $\det A$ control length, area, and volume changes respectively.
Furthermore, biological tissue is almost incompressible so that deviation from $\det\nabla y=1$ should be strongly penalized.
In our numerical experiments we will use the particular example
\begin{align*}
W(\nabla y) &= c_1 \|\nabla y\|_F^2
+ g(\det \nabla y)
\end{align*}
for a positive constant $c_1$  and a function $g:\R \to [0,\infty)$ such that $g(x) \to \infty$ for $x \to 0$. To favour deformations obeying $\det 
\nabla y \approx 1$, a term like $|\det\nabla y-1|^2$ can be incorporated into
the definition of $g$. Note that this function is polyconvex (that is, can be written as a convex function of $(\nabla y,\cof\nabla y,\det\nabla y)$),
which ensures weak lower semi-continuity of $\Reg$ as will be needed for the existence analysis.
Summarizing, we arrive at the optimization problem
$$\E^d[y]=\J^d[y]+\Reg[y]\to\min!$$
whose solution is the sought matching deformation.

\begin{remark}[Bayesian perspective]
The task of registering the two datasets can also be viewed as an inverse problem,
where a measurement $\imgPln$ is given alongside with a model of the forward operator $y\mapsto\F(\imgVol\circ y)$, which maps a deformation of the tissue into the 
corresponding projection onto a planar image.
Due to measurement noise and unknown modelling errors in the forward operator, the measurement is a random variable.
Likewise, the sought deformation $y$ can be interpreted as a random variable with different outcomes in different repetitions of the measurement.
From a Bayesian perspective, one would now like to maximize the conditional probability
\begin{align*}
P(\F(\imgVol\circ y)|\imgPln) = \frac{P(\imgPln|\F(\imgVol\circ y))P(y)}{P(\imgPln)}
\end{align*}
(we formally use $P$ like a probability density over an infinite-dimensional space).
After taking the negative logarithm, the optimization problem thus turns into
\begin{equation*}\label{Eq:LogarithmicMin}
  \min_y \ -\log P(\F(\imgVol\circ y)|\imgPln) = \min_y \left[ -\log P(\imgPln|\F(\imgVol\circ y)) - \log P(y) + \log P(\imgPln) \ \right]\,.
\end{equation*}
The term $\log P(\imgPln)$ is independent of $y$ and can be neglected,
and for the probability distribution of deformations it is reasonable to assume a Boltzmann-type distribution $P(y)\sim\exp(-\Reg[y])$, in which the 
probability decreases exponentially with increasing deformation energy.
Likewise, the probability distribution of the measurement $(\imgPln)_k$ in the $k$\textsuperscript{th} image pixel is taken as 
$P((\imgPln)_k|(v)_k)\sim\exp(-d(v_k,(\imgPln)_k))$, where $v_k=(\F(\imgVol\circ y))_k$ is the expected value in pixel $k$.
For instance, if the noise distribution is Gaussian we have $P((\imgPln)_k|(v)_k)\sim\exp(-d_2(v_k,(\imgPln)_k))$.
Integrating over all pixels we obtain $P(\imgPln|\F(\imgVol\circ y))\sim\exp(-\int_\domainPln d(\F(\imgVol\circ y)(x),\imgPln(x))\,\d x)=\exp(-\J^d[y])$.
Summarizing, we arrive at the same optimization problem, $\min_y\J^d[y]+\Reg[y]$.
\end{remark}


\begin{theorem}[Existence of minimizer]
\label{Thm:ExistenceMinimizer}
Let $\imgVol \in L^\infty(\domainVol)$ and $\imgPln \in L^\infty(\domainPln)$.
Also, let $\E^d = \J^{d} + \Reg$ for a finite nonnegative dissimilarity $d(\cdot,\cdot)$, convex and lower semi-continuous in its first argument, and a 
polyconvex lower semi-continuous stored energy function $W$ satisfying
\begin{equation*}
W(A) \geq C \left(\, \|A\|_F^p + \max\{0,\det A\}^{-r} \, \right) - \beta
\end{equation*}
for some exponents $p>3$, $r>0$ and constants $C,\beta>0$.
Then $\E^d$ admits a minimizing deformation $y$ on the set of admissible deformations
\begin{multline*}
\mathcal{A} = \left\{\vphantom{\int_{\domainVol}} y\in W^{1,p}(\domainVol, \R^3)\, \middle|\,\|y\|_{L^\infty}\leq\diam(\domainPln),\right.\\
\left.\det \nabla y > 0\text{ almost everywhere, }\int_{\domainVol} \det\nabla y(x)\, \d x \leq \vol(y(\domainVol)) \right\}
\end{multline*}
($W^{1,p}$ denotes the standard Sobolev space).
Furthermore, $y$ is almost everywhere injective.
\end{theorem}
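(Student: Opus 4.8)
The plan is to use the direct method in the calculus of variations. First I would observe that the admissible set $\mathcal{A}$ is nonempty (the identity, suitably rescaled if necessary to meet the $L^\infty$ bound, belongs to it) and that $\E^d$ is bounded below by zero, so the infimum $m=\inf_{\mathcal{A}}\E^d$ is finite and we may pick a minimizing sequence $(y_k)_k\subset\mathcal{A}$ with $\E^d[y_k]\to m$. From the coercivity assumption $W(A)\geq C(\|A\|_F^p+\max\{0,\det A\}^{-r})-\beta$ and the nonnegativity of $\J^d$, the energies $\Reg[y_k]$ are uniformly bounded, hence $\|\nabla y_k\|_{L^p}$ is uniformly bounded; combined with the uniform bound $\|y_k\|_{L^\infty}\leq\diam(\domainPln)$ this gives a uniform $W^{1,p}$ bound. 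By reflexivity of $W^{1,p}$ for $p>1$ we extract a subsequence (not relabeled) with $y_k\rightharpoonup y$ weakly in $W^{1,p}(\domainVol,\R^3)$.

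Next I would verify that the limit $y$ lies in $\mathcal{A}$ and that $\E^d$ is weakly lower semi-continuous along the sequence. Since $p>3$, the Rellich--Kondrachov theorem gives a compact embedding $W^{1,p}(\domainVol)\hookrightarrow C^0(\overline{\domainVol})$, so (after passing to a further subsequence) $y_k\to y$ uniformly; in particular the constraint $\|y\|_{L^\infty}\leq\diam(\domainPln)$ passes to the limit. For the polyconvex regularizer $\Reg$, weak lower semi-continuity together with the fact that the constraints $\det\nabla y>0$ a.e.\ and $\int_{\domainVol}\det\nabla y\,\d x\leq\vol(y(\domainVol))$ are preserved under weak $W^{1,p}$ convergence are exactly the standard results for polyconvex energies with the given coercivity (see \cite[Thm.\,7.9-1]{Ref:CiarletElasticity}, already cited above); the exponent bound $r>0$ on the $\det^{-r}$ term forces $\det\nabla y>0$ a.e.\ in the limit, and the almost-everywhere injectivity of $y$ then follows from the same theorem. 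Crucially, the weak continuity of the minors (in particular $\cof\nabla y_k\rightharpoonup\cof\nabla y$ in $L^{p/2}$ and $\det\nabla y_k\rightharpoonup\det\nabla y$ in $L^{p/3}$ for $p>3$) underpins both the lower semi-continuity of $\Reg$ and the volume constraint.

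It then remains to show $\J^d$ is lower semi-continuous along $(y_k)_k$, and this is where the model-specific work lies. The key observation is that $\imgVol\circ y_k\to\imgVol\circ y$ in a suitable weak sense: since $y_k\to y$ uniformly and $\det\nabla y_k$ is bounded in $L^{p/3}$ while the limit deformation is injective with positive Jacobian, one controls the pushforward measures and obtains $\imgVol\circ y_k\rightharpoonup\imgVol\circ y$ weakly-$*$ in $L^\infty(\domainVol)$ (using that $\imgVol\in L^\infty$ and that $y_k$ does not collapse volume in the limit). Applying the forward operator $\F$, which by the earlier continuity estimate $|[\chi\ast u](x)-[\chi\ast u](\tilde x)|\leq\|\chi(x-\cdot)-\chi(\tilde x-\cdot)\|_{L^1}\|u\|_{L^\infty}$ is continuous (indeed compact) from $L^\infty(\domainVol)$ with the weak-$*$ topology on bounded sets into $C^0(\domainPln)$, we get $\F(\imgVol\circ y_k)\to\F(\imgVol\circ y)$ uniformly on $\domainPln$. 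Finally, since $d$ is nonnegative, convex and lower semi-continuous in its first argument and the second argument $\imgPln$ is fixed, Fatou's lemma (or lower semi-continuity of convex integral functionals) yields $\J^d[y]\leq\liminf_k\J^d[y_k]$. Combining the two lower semi-continuity statements gives $\E^d[y]\leq\liminf_k\E^d[y_k]=m$, and since $y\in\mathcal{A}$ we conclude $\E^d[y]=m$, so $y$ is the desired minimizer, which moreover is almost everywhere injective.

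I expect the main obstacle to be making precise the convergence $\F(\imgVol\circ y_k)\to\F(\imgVol\circ y)$: the composition operator $u\mapsto u\circ y$ is not continuous on $L^\infty$ under mere uniform convergence of the deformations, so one must exploit the injectivity and the $L^1$ bound on $\det\nabla y_k$ (equivalently, the uniform integrability coming from the $L^{p/3}$ bound with $p>3$) to prevent concentration of mass, and then invoke the smoothing by the fixed kernel $\chi$ to upgrade the resulting weak convergence of $\imgVol\circ y_k$ to uniform convergence of the projected images. Handling the zero-extension of $\imgVol$ outside $\domainVol$ cleanly, and the fact that $y(\domainVol)$ may not equal $\domainVol$, will require some care but no new ideas.
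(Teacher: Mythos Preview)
Your overall architecture---direct method, coercivity to get a $W^{1,p}$ bound, compact embedding into $C^0$, weak continuity of minors and lower semi-continuity of the polyconvex regularizer via Ciarlet's results---matches the paper and is correct. (One small slip: $\E^d$ is only bounded below by $-\beta\,\vol(\domainVol)$, not by zero, since $W$ is not assumed nonnegative.) Your treatment of $\F$ is in fact a pleasant variant: the paper passes from weak-$*$ convergence of $\imgVol\circ y_k$ to weak-$*$ convergence of $\F(\imgVol\circ y_k)$ by identifying $\F$ as the adjoint of a bounded operator $L^1(\domainPln)\to L^1(\domainVol)$, whereas you upgrade to uniform convergence of $\F(\imgVol\circ y_k)$ via equicontinuity and Arzel\`a--Ascoli. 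Your route is correct and arguably cleaner, and it makes the final lower semi-continuity of $\J^d$ trivial.

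The genuine gap is one step earlier, at the convergence $\imgVol\circ y_k\to\imgVol\circ y$. You propose to obtain it from uniform convergence of $y_k$, a.e.\ injectivity, and the $L^{p/3}$ bound on $\det\nabla y_k$, invoking ``no concentration of mass''. But the obstruction is the opposite one: volume \emph{collapse}, i.e.\ $\det\nabla y_k$ becoming small on a set of positive measure, which makes $y_k^{-1}$ spread a null set in the target over a set of positive measure in the domain. An upper bound on $\det\nabla y_k$ says nothing about this. What is actually needed---and is exactly why the hypothesis $W(A)\geq C(\det A)^{-r}-\beta$ is there---is the uniform bound $\int_{\domainVol}(\det\nabla y_k)^{-r}\,\d x\leq C^{-1}(\Reg[y_k]+\beta\,\vol(\domainVol))$. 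The paper uses it as follows: mollify $\imgVol$ to $G_\delta\ast\imgVol$, and for the error term apply H\"older with exponent $s=(r+1)/r$,
\[
\int_{\domainVol}\!\!|(\imgVol-G_\delta\ast\imgVol)\circ y_k|^q\,\d x
\;\leq\;\Big(\!\int_{\domainVol}\!\!|(\imgVol-G_\delta\ast\imgVol)\circ y_k|^{qs}\det\nabla y_k\,\d x\Big)^{1/s}
\Big(\!\int_{\domainVol}\!\!(\det\nabla y_k)^{-r}\,\d x\Big)^{1/(r+1)},
\]
then change variables in the first factor (using the area formula and a.e.\ injectivity of $y_k$, which gives $N(y_k\,|\,v)=1$) to bound it by $\|\imgVol-G_\delta\ast\imgVol\|_{L^{qs}}^q$. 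This yields strong convergence $\imgVol\circ y_k\to\imgVol\circ y$ in every $L^q$ with $q<\infty$, and then the weak-$*$ limit in $L^\infty$ is identified. Without the $(\det)^{-r}$ control this step fails; the ingredients you list are not enough to rule out that $y_k$ squeezes a fixed-measure region onto a small set where $\imgVol$ and its mollification differ. So the ``no new ideas'' expectation is too optimistic: the missing idea is precisely to exploit the lower growth of $W$ in $\det A$.
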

\begin{proof}
We follow the direct method of the calculus of variations.
First note that $\E^d$ is bounded below by $-\beta\vol(\domainVol)$ and that $\inf_{\mathcal A}\E^d$ is finite due to $\E^d[\id]<\infty$.

\emph{Compactness: }
Consider a minimizing sequence $y_k\in\mathcal A$, $k=1,2,\ldots$, with $\E^d[y_k]\to\inf_{\mathcal A}\E^d$ monotonically as $k\to\infty$.
Due to the growth condition on $W$ we have $$\E^d[y_1]\geq\E^d[y_k]\geq C\|\nabla y_k\|^p-\beta\vol(\domainVol)$$
so that $\|\nabla y_k\|_{L^p}$ is uniformly bounded.
Together with the admissibility condition $\|y_k\|_{L^\infty}\leq\diam(\domainPln)$ we obtain uniform boundedness of $\|y_k\|_{W^{1,p}}$
so that we can extract a weakly converging subsequence (still indexed by $k$) $y_k\rightharpoonup y$ in $W^{1,p}(\domainVol;\R^3)$.
Due to $p>3$, by Sobolev embedding we may even assume $y_k\to y$ strongly in the space $C^{0,\alpha}(\domainVol;\R^3)$ of H\"older continuous functions with 
exponent $\alpha<1-p/3$.

\emph{Lower semi-continuity of $\Reg$: }
We have $\Reg[y]\leq\liminf_{k\to\infty}\Reg[y_k]$ by the properties of $W$.
Indeed, by H\"older's inequality $\cof\nabla y_k$ and $\det\nabla y_k$ are uniformly bounded in $L^{2p/3}(\domainVol)$ and $L^{p/3}(\domainVol)$, respectively, 
and thus converge for a subsequence.
By \cite[Thm. 7.6-1]{Ref:CiarletElasticity} we even have
\begin{equation*}
\cof\nabla y_k\rightharpoonup\cof\nabla y\text{ in }L^{2p/3}(\domainVol)
\quad\text{and}\quad
\det\nabla y_k\rightharpoonup\det\nabla y\text{ in }L^{2p/3}(\domainVol)\,.
\end{equation*}
Now Mazur's lemma implies the existence of a sequence of strongly and pointwise almost everywhere converging convex combinations
\begin{equation*}
\sum_{i=k}^{N_n}a_i^k(\nabla y_i,\cof\nabla y_i,\det\nabla y_i)\to(\nabla y,\cof\nabla y,\det\nabla y)
\end{equation*}
as $k\to\infty$, where $N_n\geq k$ and  the nonnegative coefficients $a_i^k$ sum up to one.
Since $W$ is polyconvex we can write $W(A)=\tilde W(\nabla A,\cof\nabla A,\det\nabla A)$ for a convex function $\tilde 
W:\R^{3\times3}\times\R^{3\times3}\times\R\to\R$.
Thus, with Fatou's lemma and the lower semi-continuity of $W$ we now have
\begin{multline*}
\Reg[y]
=\int_\domainVol\tilde W(\nabla y,\cof\nabla y,\det\nabla y)\,\d x
=\int_\domainVol\tilde W\left(\lim_{k\to\infty}\sum_{i=k}^{N_n}a_i^k(\nabla y_i,\cof\nabla y_i,\det\nabla y_i)\right)\,\d x\\
\leq\int_\domainVol\liminf_{k\to\infty}\sum_{i=k}^{N_n}a_i^k\tilde W(\nabla y_i,\cof\nabla y_i,\det\nabla y_i)\,\d x
\leq\liminf_{k\to\infty}\sum_{i=k}^{N_n}a_i^k\int_\domainVol\tilde W(\nabla y_i,\cof\nabla y_i,\det\nabla y_i)\,\d x\\
=\liminf_{k\to\infty}\sum_{i=k}^{N_n}a_i^k\Reg[y_i]
\geq\liminf_{k\to\infty}\Reg[y_k]\,.
\end{multline*}

\emph{Properties of limit function: }
The limit function $y$ lies in $\mathcal A$.
Indeed, by the uniform convergence $y_k\to y$ we have $\|y\|_{L^\infty}=\lim_{k\to\infty}\|y_k\|_{L^\infty}\leq\diam(\domainPln)$.
To see that $\det \nabla y > 0$ holds almost everywhere, consider the set
\begin{align*}
  S_{\varepsilon} = \{ x \in \domainVol \, |\, \det \nabla y(x) < \varepsilon \}
\end{align*}
for $\varepsilon>0$. The growth condition on $W$ and the lower semi-continuity of $\Reg$ imply
\begin{equation*}
\vol(S_{\varepsilon})(C\varepsilon^{-r}-\beta)
\leq \Reg[y]
\leq \liminf_{k\to\infty}\Reg[y_k]
\leq \liminf_{k\to\infty}\E^d[y_k]
\leq\E^d[y_1]\,.
\end{equation*}
Thus, $\vol(S_{\varepsilon}) \to 0$ as $\varepsilon \to 0$ implies $\det \nabla y > 0$ almost everywhere.
Likewise, due to the weak convergence $\det\nabla y_k\rightharpoonup\det\nabla y$ and the convergence $y_k\to y$ in $C^{0,\alpha}(\domainVol)$ we have
\begin{equation*}
\int_\domainVol\det\nabla y\,\d x
=\lim_{k\to\infty}\int_\domainVol\det\nabla y_k\,\d x
\leq\lim_{k\to\infty}\vol(y_k(\domainVol))
=\vol(y(\domainVol))\,.
\end{equation*}

Furthermore, $y$ is injective almost everywhere, that is, the cardinality
\begin{equation*}
N(y\,|\,v)=\mathrm{card}(y^{-1}(\{v\}))
\end{equation*}
equals $1$ for almost every $v\in y(\domainVol)$.
Indeed, by the change of variables formula for Sobolev functions \cite[Thm.\,2]{MaMi73} we have
\begin{equation*}
\vol(y(\domainVol))
\leq\int_{y(\domainVol)}N(y\,|\,v)\,\d v
=\int_\domainVol\det\nabla y(x)\,\d x\,.
\end{equation*}
Since also the opposite inequality holds, we must have equality and $N(y\,|\,v)=1$ almost everywhere.

\emph{Lower semi-continuity of $\J^d$: }
Note that we have $\imgVol\circ y_k\to\imgVol\circ y$ as $k\to\infty$ in any $L^q(\domainVol)$ with $q\in[1,\infty)$.
Indeed, for a Dirac sequence $G_{\delta}$ of smooth mollifiers we have
\begin{multline*}
\|\imgVol \circ y_k-\imgVol \circ y \|_{L^q}
\leq\|\imgVol \circ y_k-(G_{\delta} \ast \imgVol) \circ y_k\|_{L^q}\\
+ \|(G_{\delta} \ast \imgVol) \circ y_k-(G_{\delta} \ast \imgVol) \circ y\|_{L^q}
+ \|(G_{\delta} \ast \imgVol) \circ y-\imgVol \circ y\|_{L^q}\,.
\end{multline*}
Abbreviating $s=(r+1)/r$ and employing H\"older's inequality, for the first summand we obtain
\begin{align*}
\int_{\domainVol} &|\imgVol \circ y_k - (G_{\delta} \ast \imgVol) \circ y_k |^q \, \d x
\leq \int_{\domainVol}\left| \frac{| \imgVol \circ y_k- (G_{\delta} \ast \imgVol) \circ y_k |^q }{ \det \nabla y_k^{\frac{1}{s}} } \det \nabla 
y_k^{\frac{1}s}\right| \, \d x\\
&\leq \left( \int_{\domainVol} | \imgVol \circ y_k -(G_{\delta} \ast \imgVol) \circ y_k |^{qs} \det \nabla y_k \, \d x 
\right)^{\frac{1}s}\left(\int_\domainVol\det\nabla y_k^{-\frac1{s-1}}\,\d x\right)^{1-\frac1s}\\
&=\left( \int_{y_k(\domainVol)} | \imgVol -G_{\delta} \ast \imgVol |^{qs} N(y_k\,|\,v) \, \d v \right)^{\frac{1}s}\left(\int_\domainVol\det\nabla y_k^{-r}\,\d 
x\right)^{\frac1{r+1}}\\
&\leq \|\imgVol - G_{\delta} \ast \imgVol\|_{L^{qs}(\R^3)}^q \left(\frac{\Reg[y_k]+\beta\vol(\domainVol)}C\right)^{\frac1{r+1}}\,,
\end{align*}
where we used the change of variables for Sobolev functions \cite[Thm.\,2]{MaMi73} as well as $N(y_k\,|\,v)=1$ for almost all $v\in y_k(\domainVol)$ (by the 
same argument as for $y$).
Since $\imgVol\in L^{qs}(\domainVol)$ and $\Reg[y_k]\leq\E^d[y_k]\leq\E^d[y_1]$, the right-hand side converges to $0$ as $k\to\infty$ and then $\delta\to0$.
For the second summand we observe
\begin{equation*}
\|(G_{\delta} \ast \imgVol ) \circ y_k-(G_{\delta} \ast \imgVol ) \circ y \|_{L^q}^q
\leq L_{\delta}^q \|y_k- y\|_{L^q}^q\,, 
\end{equation*}
$L_{\delta}$ being the Lipschitz constant of $G_{\delta} \ast \imgVol$.
Again, letting first $k\to\infty$ and then $\delta\to0$ the right-hand side converges to $0$.
The third summand is treated like the first so that in summary $\imgVol\circ y_k\to\imgVol\circ y$.

Due to $\imgVol\in L^\infty(\domainVol)$, the composition $\imgVol\circ y_k$ is uniformly bounded in $L^\infty(\domainVol)$
so that any subsequence contains another weakly-* converging subsequence in $L^\infty(\domainVol)$.
Due to the strong convergence $\imgVol\circ y_k\to\imgVol\circ y$ in $L^q(\domainVol)$, the limit must be the same and thus
\begin{equation*}
\imgVol\circ y_k\stackrel*\rightharpoonup\imgVol\circ y\quad\text{in}\quad L^\infty(\domainVol)
\end{equation*}
for the whole sequence.
Furthermore it is straightforward to check that $\F$ is the adjoint operator to
\begin{equation*}
\F':L^1(\domainPln)\to L^1(\domainVol),\quad
g\mapsto (x\mapsto[g*\chi(-\cdot,-\cdot,-x_3)](x_1,x_2))\,,
\end{equation*}
which is a bounded linear operator due to
\begin{equation*}
\|\F'g\|_{L^1}
=\int_{-1}^1\|g*\chi(-\cdot,-\cdot,-x_3)\|_{L^1}\,\d x_3
\leq\int_{-1}^1\|g\|_{L^1}\|\chi(-\cdot,-\cdot,-x_3)\|_{L^1}\,\d x_3
=\|g\|_{L^1}\|\chi\|_{L^1}
\end{equation*}
by Young's convolution inequality.
As a consequence, $\F(\imgVol\circ y_k)\stackrel*\rightharpoonup\F(\imgVol\circ y)$ in $L^\infty(\domainPln)$, since for any $g\in L^1(\domainPln)$ we have
\begin{equation*}
\int_\domainPln g\F(\imgVol\circ y_k)\,\d x
=\int_\domainVol\F'(g)\,\imgVol\circ y_k\,\d x
\to\int_\domainVol\F'(g)\,\imgVol\circ y\,\d x
=\int_\domainPln g\F(\imgVol\circ y)\,\d x
\end{equation*}
as $k\to\infty$.
The convexity of $d$ in its first argument now implies $\liminf_{k\to\infty}\J^d[y_k]\geq\J^d[y]$, as desired.

Summarizing, $\E^d[y]=\J^d[y]+\Reg[y]\leq\liminf_{k\to\infty}\J^d[y_k]+\Reg[y_k]=\liminf_{k\to\infty}\E^d[y_k]=\inf_{\mathcal A}\E^d$ so that $y\in\mathcal A$ 
must be a minimizer.
\end{proof}

\notinclude{
\begin{theorem}
\label{Thm:ExistenceMinimizer}
 Let $\E^d(y) := \J^{d}(y) + \Reg(y)$, $d=\dtwo$ or $d = \done$, $\imgVol \in L^2(\domainVol), \imgPln \in L^2(\domainPln)$ \todo{didn't we want to use 
$L^\infty$?} and define the \emph{set of 
admissible deformations} $\mathcal{A}$ to be
 \begin{align*}
  \mathcal{A} = \left\{ y\in W^{1,p}(\domainVol, \R^3)\, :\, \mathrm{cof}\nabla y \in L^q(\domainVol, \R^{3\times 3}),\ \left\|\frac{1}{|\domainVol|} 
\int_{\domainVol} y(x) \, \d x \right\| \leq \diam(\domainVol) \right\}
 \end{align*}\todo{why are the old admissibility conditions (which ensure injectivity) now removed?} where $p > 3$ and $q > 1$. If the stored energy function 
$W$ satisfies the conditions
 \begin{enumerate}
 \item $W$ is lower semi-continuous and convex,
 \item for all $F,G \in \R^{3\times3}$ and $\xi \in \R^+$ the following inequality holds
 \begin{equation}
  W(F,\, G,\, \xi) \geq C \left(\, \|F\|_F^p + \|G\|_F^q + \xi^r + g(\xi)\, \right) +\beta \label{Eq:RegInequalityCondition}
 \end{equation}\todo{the growth conditions on the cofactor matrix and the determinant are unnecessary due to $p>3$}where $g$ is a continuous function such that 
$g(\xi) \to \infty$ if $\xi \to 0$, $g(\xi) \geq \xi^{-1}$\todo{one can use $g$ with weaker growth} and $\beta \in \R$ and $C > 0$ are constants,
\end{enumerate}then there exists $y \in \mathcal{A}$ such that
\begin{align*}
  \E^d(y) &= \inf_{v\in \mathcal{A}} \E^d(v)\,.
\end{align*}
\end{theorem}
\textbf{Proof:} If $y_n\in \mathcal{A}$ is any sequence such that $\E^d(y_n) \to \inf_{y \in \mathcal{A}} \E^d(y)$ then the Poincar\'{e} inequality and the 
property $\left\|\frac{1}{|\domainVol|} \int_{\domainVol} y(x) \, \d x \right\| \leq \diam(\domainVol)$ imply the boundedness of $y_n$ in $L^p$. The 
second condition imposed on $W$ implies then, that the complete sequence $(y_n, \nabla y_n, \mathrm{cof}\nabla y_n, \det \nabla y_n)$ is bounded in $L^p \times 
L^p \times L^q \times L^r$. Since this product space is reflexive we can extract a weakly converging subsequence
\begin{align}
  (y_k, \nabla y_k, \mathrm{cof} \nabla y_k, \det y_k)\ &\rightharpoonup\ (y, \nabla y,\, \eta,\, \xi), \ \eta \in \R^{3\times 3},\ \xi \in \R\,. 
\label{Ref:ConvergingSubsequence}
\end{align}
Now by \cite[Thm. 7.6-1]{Ref:CiarletElasticity}, we have
\begin{align*}
&\
\left.
\begin{array}{cll}
y_k & \rightharpoonup_{W^{1,p}} & y \\
\mathrm{cof}\, \nabla y_k\ &\rightharpoonup_{L^q} & \eta \\
\det \nabla y_k\ &\rightharpoonup_{L^r} & \xi
\end{array}
\right\}
\ \Rightarrow \ \eta = \mathrm{cof}\, \nabla y, \ \xi = \det\,\nabla y\,.
\end{align*}

To see that $\det \nabla y > 0$ holds almost everywhere, consider the set
\begin{align*}
  S_{\varepsilon} := \{ x \in \domainVol \, :\, \det \nabla y(x) < \varepsilon \}\,.
\end{align*}The coercivity of $W$ and the property $g(\xi) \geq \xi^{-1}$ implies for small enough $\varepsilon$
\begin{align}
  C\, \vol(S_{\varepsilon}) \varepsilon^{-1} &\leq \int_{\domainVol} g(\det \nabla y(x)) \, \d x \leq \int_{\domainVol} W(\nabla y(x), 
\mathrm{cof}\nabla y(x), \det \nabla y(x)) \, \d x\\ & \leq \E^d(y) \leq \E^d(\mathrm{id}) < \infty\,. \label{Eq:ProofInjectivity}
\end{align}\todo{the last line is incorrect; it requires that one has already shown that $y$ is a minimizer/has smaller energy than the identity} Now 
$\vol(S_{\varepsilon}) \to 0$ if $\varepsilon \to 0$ implies $\det \nabla y > 0$ almost everywhere.\\

Second, we note that $y_k \rightharpoonup y$ implies
\begin{align*}
 0 \leftarrow \left\| \frac{1}{|\domainVol|} \int_{\Omega} y - y_k\, \mathrm{dx}\right\| &\geq \left\| \frac{1}{|\domainVol|} \int_{\Omega} y\, 
\mathrm{dx}\right\| - \left\| \frac{1}{|\domainVol|} \int_{\Omega}  y_k\, \mathrm{dx}\right\|\\  &\geq \left\| \frac{1}{|\domainVol|} \int_{\Omega} y\, 
\mathrm{dx}\right\| - \diam(\domainVol),
\end{align*}which shows, that the average of $y$ over $\domainVol$ also remains bounded by $\diam(\domainVol)$.\\

By help of Mazur's Theorem and Fatou's Lemma it is possible to prove that the mapping 
\begin{align*}
 y \mapsto \int_{\domainVol} W(\nabla y, \mathrm{cof} \nabla y, \det\nabla y) \ \d x
\end{align*}is weakly lower semi-continuous, since $W$ is lower semi-continuous and convex.\\

Before we establish the weak lower semi-continuity of $\J^d(\cdot)$, we ascertain that this functional is indeed well defined on $\mathcal{A}$.
The composition $\imgVol \circ y$ is in $L^{\infty}(\R^3)$, since for any $s > 1$
\begin{align*}
  \|\imgVol \circ y\|_{L^s(\R^3)}^s &= \int_{\R^3} |\imgVol \circ y|^s\, \d x = \int_M |\imgVol \circ y|^{s}\, (\det \nabla y)^{\frac{1}{2}}\, (\det \nabla 
y)^{-\frac{1}{2}} \d x \\
  &\leq \left( \int_M |\imgVol \circ y|^{2s} \det \nabla y \, \d x \right)^{\frac{1}{2}} \| (\det \nabla y)^{-\frac{1}{2}} \|_{L^2(M)} \\
  &\leq \left( \int_{y(M)} |\imgVol|^{2s}\, \d x \right)^{\frac{1}{2}} \| (\det \nabla y)^{-\frac{1}{2}} \|_{L^2(M)} \leq \|\imgVol\|_{L^{2s}(\domainVol)}^s 
\| (\det \nabla y)^{-\frac{1}{2}} \|_{L^2(M)}
\end{align*}where $M := \mathrm{supp}(\imgVol \circ y)$. The same argument as in (\ref{Eq:ProofInjectivity}) shows that condition 
(\ref{Eq:RegInequalityCondition}) 
guarantees the boundedness of $\|(\det \nabla y)^{-\frac{1}{2}}\|_{L^{2}}$ for $y\in \mathcal{A}$. Having established $\|\imgVol \circ y\|_{L^s(\R^3)} \leq  
\|\imgVol\|_{L^{2s}(\domainVol)} \| (\det \nabla y)^{-\frac{1}{2}} \|_{L^2(M)}^{\frac{1}{s}}$, passing to the limit $s\to \infty$ shows $\|\imgVol \circ 
y\|_{L^{\infty}(\R^3)} \leq \|\imgVol\|_{L^{\infty}(\R^3)}$.\\

As shown at the beginning of this section, $\imgVol \circ y \in L^{\infty}(\R^3)$ implies $\F(\imgVol \circ y) \in L^{\infty}(\R^2)$ and we see that the 
functional
\begin{align*}
  y \mapsto \|d(\F(\imgVol \circ y), \imgPln)\|_{L^{\mathrm{D}}(\imgPln)}, \quad  \small{\begin{cases}
                                                                        \mathrm{D} = 1 & \mathrm{if}\ d = \done\\
                                                                        \mathrm{D} = 2 & \mathrm{if}\ d = \dtwo
                                                                       \end{cases}} 
\end{align*}is indeed well-defined.\todo{Not sure what this functional is meant to be. If $d$ is nonnegative, one can write $\J^d[y]=\|d(\F(\imgVol \circ y), 
\imgPln)\|_{L^1(\domainPln)}$ -- do you mean $\D=1$ always?}\\

To ascertain the lower semi-continuity of $\J^d$, note that we have a compact embedding $W^{1,p}(\domainVol, \R^3) \hookrightarrow C^{0, \alpha}(\domainVol, 
\R^3)$ and
can therefore assume $y_k \to y$ in $C^{0, \alpha}(\domainVol, \R^3)$ for a subsequence of (\ref{Ref:ConvergingSubsequence}). For a Dirac sequence $G_{\delta}$ 
we have
\begin{align*}
  \|\,\d(\imgVol \circ y_k, \imgVol \circ y)\, \|_{L^{\infty}} &\leq \|\, \d(\imgVol \circ y_k, (G_{\delta} \ast \imgVol) \circ y_k)\, \|_{L^{\infty}} + \|\, 
\d((G_{\delta} \ast \imgVol) \circ y_k, (G_{\delta} \ast \imgVol) \circ y)\, \|_{L^{\infty}}\\
                                         & + \|\, \d((G_{\delta} \ast \imgVol) \circ y, \imgVol \circ y)\, \|_{L^{\infty}}\,. \end{align*}
\todo{What is $\d$? Probably Euclidean distance -- it cannot be $d$, since then the triangle inequality is false.}
For the first summand we have for any $s>1$
 \begin{align*}
   \int_{\domainVol} |\,\imgVol \circ y_k -&\ (G_{\delta} \ast \imgVol) \circ y_k |^{s} \, \d x \leq \int_{\domainVol}\left| \frac{| \imgVol \circ y_k- 
(G_{\delta} 
\ast \imgVol) \circ y_k |^{s} }{ \det \nabla y_k^{\frac{1}{2}} } \det \nabla y_k^{\frac{1}{2}}\right| \, \d x  \\
   &\leq \left( \int_{\domainVol} | \imgVol \circ y_k -\, (G_{\delta} \ast \imgVol) \circ y_k |^{2s} \det \nabla y_k \ \d x \right)^{\frac{1}{2}} \|(\det 
\nabla 
y_k)^{-\frac{1}{2}}\|_{L^2} \\
   & \leq \|\imgVol -\, G_{\delta} \ast \imgVol\|_{L^{2s}}^{s} \|(\det \nabla y_k)^{-\frac{1}{2}}\|_{L^{2}}\,.
 \end{align*}
In both cases $\d = \d_1$ and $\d = \d_2$\todo{probably you mean $d=\done$ and $d=\dtwo$, but where are they used in the above calculation? I have the feeling 
you mix up $\d$ (which probably shall be Euclidean distance) with $d$.} the change of variables for Sobolev functions can be applied, see for example 
\cite[Thm. 
1]{Ref:BallSobolevChangeOfVariables}.\todo{this reference requires the mapping to be a homeomorphism on the boundary, so it cannot be used} 
Since $\imgVol \in L^{2s}$, we have $\|\imgVol -\, G_{\delta} \ast \imgVol\|_{L^{s}} \to 0$ for $\delta \to 0$. Letting $s\to \infty$ shows the convergence of 
the first summand.\todo{This is wrong -- one does not have convergence in $L^\infty$ (as an example consider the mollification of the Heaviside function); one 
cannot ``take the limit $s\to\infty$''} In the same fashion, $\|\, (G_{\delta} \ast \imgVol) \circ y- \imgVol \circ y\, \|_{L^{\infty}} \to 0$ for $\delta \to 
0$ is shown.\\

For the second summand $\|\d((G_{\delta} \ast \imgVol ) \circ y_k, (G_{\delta} \ast \imgVol ) \circ y) \|_{L^{\infty}}$ we observe
\begin{align*}
 \|\d((G_{\delta} \ast \imgVol ) \circ y_k, (G_{\delta} \ast \imgVol ) \circ y) \|_{L^{\infty}} &\leq L_{\delta}^{\D} \|\d(y_k, y)\|_{L^{\infty}}\leq 
L^{\D}_{\delta} \|y_k - y\|^{\D}_{ C^{0, \alpha/\D} }\,, 
\end{align*}$L_{\delta}$ being the Lipschitz constant of $G_{\delta} \ast \imgVol$. Since $y_k \to_{C^{0, \alpha}} y$ 
we have $\imgVol \circ y_k \to_{L^{\infty}} \imgVol \circ y$ if the limit $k \to \infty$ is considered prior to $\delta \to 0$.\\

Since $\F$ is a bounded linear operator, $\F(\imgVol \circ y_k) \to_{L^{\infty}} \F(\imgVol \circ y)$ follows and thereby 
\begin{align*}
  \d(\F(\imgVol \circ y_k) , \imgPln(x) ) \ \to_{\mathrm{ptw.}} \ \d( \F(\imgVol \circ y)(x),  \imgPln(x) )\, ,
\end{align*}\todo{$\d$ should be $d$?}almost everywhere on $\domainPln$. Fatou's Lemma then implies
\begin{align*}
  \J^d(y) =  \int_{\domainPln} \liminf_{k\to \infty} \d(\F(\imgVol \circ y_k) , \imgPln(x) ) \ \d x \leq \liminf_{k\to \infty} \J^d(y_k)
\end{align*}and thus $\E^d(y) = \inf_{v\in \mathcal{A}} \E^d(v)$ as claimed.
\hfill $\Box$

}

\section{Numerical Implementation}\label{sec:numerics}
In this section we discuss the discretization and numerical minimization of the energy functional
\begin{align*}
 \E^d[y] &= \int_{\domainPln}d\left(\,\F(\imgVol \circ y)(x),\, \imgPln(x)\, \right) \, \d x + \int_{\domainVol} W(\nabla y(x)) \ \d x\,,
\end{align*}
which is nontrivial due to the nonlocal convolution operator in $\F$, the composition of discretized functions, and the nondifferentiability of the discretized functions.
As before, $d$ denotes either the Euclidean distance $\done(x,y) = |x-y|$ or its square $d_2(x,y) = |x-y|^2$, but other choices can be implemented 
in the same way.
To obtain a differentiable functional in the former case (which will allow simpler numerics), we make the modification $\done(x,y) = \sqrt{(x-y)^2+\delta^2}$ 
with $\delta > 0$ a small regularization parameter. 
In our implementation the stored energy function $W$ has the form
\begin{align}
W(A) &= c_1 \|A\|_F^2 + c_2(\det A)^{-1} + c_3(1-\det A)^2 + D \label{Eq:StoredEnergyFunction}
\end{align}with constants $c_1,c_2,c_3 \geq 0$, $D\in \R$ such that $W\geq0$ and $W(S) = 0$ for rotation matrices $S$.
This specific choice violates the growth condition of \cref{Thm:ExistenceMinimizer}
(which was needed to apply a change of variables formula for Sobolev functions, while the lower semi-continuity of $\Reg$ could also be obtained for weaker 
growth conditions \cite[Thm.\,3.6]{Pedregal00}),
however, we observed no indication of degeneration of the deformations in our numerical experiments so that the above choice seemed sufficient.
Note that other stored energy functions can be implemented just as well, for instance the strain energy densities of Neo-Hookean materials,
\begin{align*}
 W(A) = \frac{1}{2}\mu(\|A\|_F^2 - 2 \log(\det A)) + \frac{\lambda}{2}(1-\det A)^2 - \frac{3\mu}{2}\, ,
\end{align*}which only differ from our choice by the slightly weaker penalty term for volume compression.
Note also that in our experiments the constraints $\int_{\domainVol} \det\, \nabla y(x)\, \d x \leq \vol(y(\domainVol))$ and 
$\|y\|_{L^\infty}\leq\diam(\domainPln)$ were always satisfied without explicit enforcement.

Assuming a twice differentiable 3D image $\imgVol$, the first and second G\^ateaux derivatives of $\J^d$ in $y\in 
\mathcal{A}$ for suitable variations $\phi$ and $\psi$ are
\begin{align*}
 \partial \J^d[y](\phi) &=  \int_{\domainPln} \partial_1d(\, \F(\imgVol\circ y)(x),\, \imgPln(x))\, \left(\chi \ast [\phi\cdot(\nabla \imgVol)\circ y] 
\right)(x_1,x_2,0)\ \d x\,,  \label{Eq:GateauxDerivatives} \\
 \partial^2 \J^d[y](\phi, \psi) &=\mathcal H^\loc(\phi,\psi)+\mathcal H^\nonloc(\phi,\psi)\quad\text{with}\\
\mathcal H^\loc(\phi,\psi)
&= \int_{\domainPln} \partial_1d(\, \F(\imgVol\circ y)(x),\, \imgPln(x))\, \left(\chi \ast \left[\psi^T\,
((\mathrm{Hess}\, \imgVol)\circ y) \, \phi\right]\right)(x, 0) \, \d x\,, \\
\mathcal H^\nonloc(\phi,\psi)
 &= \int_{\domainPln} \partial_1^2 d(\, \F(\imgVol\circ y)(x),\, \imgPln(x))\, \left(\chi \ast [\phi\cdot(\nabla \imgVol)\circ y]\right)(x_1,x_2,0)\,\left(\chi 
\ast [\psi\cdot(\nabla \imgVol)\circ y]\right)(x_1,x_2,0)\, \d x\, , \notag
\end{align*}
where $\mathrm{Hess}\, \imgVol$ denotes the Hessian of $\imgVol$.
While for convex $d$ the second summand in $\partial^2 \J^d[y]$ is always positive semi-definite, the first summand may destroy this definiteness.
For the sake of completeness, we also write down the expressions for the first and second G\^ateaux derivative of the hyperelastic regularizer.
Rewriting $W$ defined in \eqref{Eq:StoredEnergyFunction} in the form
\begin{equation*}
  W(A)=\bar W(\|A\|_F^2,\det A)
  \quad\text{with}\quad
  W(I_1, I_3) = c_1 I_1 + c_2 I_3^{-1} + c_3(1-I_3)^2 + D\, ,
\end{equation*}
its partial derivatives are given by
\begin{align*}
  \partial_1\bar W(I_1, I_3) &= c_1\,, & \partial_2\bar W(I_1, I_3) &= -c_2 I_3^{-2} - 2c_3 (1-I_3)\,, \\
  \partial_1^2\bar W(I_1, I_3) = \partial_1\partial_2\bar W(I_1, I_3) &= 0\,, & \partial_2^2\bar W(I_1, I_3) &= 2\, c_2 I_3^{-3} + 2c_3\, .
\end{align*}
With the help of the identities $\partial_A\det(A)(B) = \tr(B^T\cof A)$ and $\partial_A(\|A\|_F^2)(B) = 2\tr(B^TA)$ and abbreviating $I_1=\|\nabla y\|_F^2$ and $I_3=\det\nabla y$,
the G\^ateaux derivatives of the hyperelastic regularizer read
\begin{align*}
\partial \Reg[y](\phi) &= \int_{\domainVol} 2\partial_1\bar W(I_1, I_3)\, \tr(\nabla\phi^T\nabla y) + \partial_2\bar W(I_1, I_3)\, \tr(\nabla\phi^T 
\cof\nabla y)\, \d x\,, \label{Eq:GateauxDerivativesRegularizer} \\
\partial^2 \Reg[y](\phi, \psi) &= \int_{\domainVol} 2\partial_1\bar W(I_1, I_3)\tr(\nabla\phi^T\nabla\psi) + \partial_2^2\bar W(I_1, I_3)\, \tr(\nabla\psi^T \cof\nabla y)\, \tr(\nabla \phi^T \cof\nabla y) \notag \\
& \qquad\ + \frac{\partial_2\bar W(I_1, I_3)}{I_3}\, \left[ \tr(\nabla \psi^T \cof\nabla y)\,\tr(\nabla\phi^T\cof\nabla y) -\tr(\nabla \phi^T\cof\nabla y\,\nabla\psi^T\cof\nabla y) \right] \, \d x\,.
\notag
\end{align*}

\paragraph{Spatial discretization.}
The image domains $\domainPln$ and $\domainVol$ are discretized by two dyadically nested hierarchies of regular rectilinear grids $(\TPln^n)_{n=1,\ldots, N}$ and $(\TVol^n)_{n=1,\ldots, N}$, respectively.
The number of nodes in the $n$\textsuperscript{th} grid along each coordinate direction is $M_n=2^n+1$,
where the resolution of the given discrete input images determines $N$, the level of the 
finest grids $\TPln^N$ and $\TVol^N$. 

Introducing multilinear Finite Element basis functions on each grid gives rise to a hierarchy of $C^0$-Finite Element 
spaces $X^n = (X^n_{\twoD}\times X^n_{\threeD})_{n=1,\ldots,N}$ with
$X^n \subset X^m$ whenever $n \leq m$
and corresponding restriction and prolongation operators chosen as follows.
On a one-dimensional grid with $M_n$ nodes a Finite Element function $\xi$ can be identified with the vector $(\xi_1,\ldots,\xi_{M_n})$ of its nodal values.
On such a grid we define the one-dimensional restriction and prolongation operators as
\begin{align*}
\mathfrak R_n^{\oneD}: \left(\xi_j\right)_{1\leq j \leq M_n}\ &\mapsto\ 
\left(\tfrac{\xi_{2j-2}+2\xi_{2j-1}+\xi_{2j}}4\right)_{1\leq j\leq (M_n+1)/2}\,, \\
\mathfrak P_n^{\oneD}: \left(\xi_j\right)_{1\leq j \leq M_n}\ &\mapsto\ (\xi_1, \tfrac{\xi_1+\xi_2}2, 
\xi_2, \tfrac{\xi_2+\xi_3}2, \ldots, \xi_{M_n-1}, \tfrac{\xi_{M_n-1}+\xi_{M_n}}2, \xi_{M_n})
\end{align*}
(for ease of notation we set $\xi_0=\xi_1$ and $\xi_{M_n+1}=\xi_{M_n}$).
The restriction and prolongation operators 
\begin{equation*}
\mathfrak R_n: X^n \to X^{n-1}, \quad \mathfrak P_n: X^{n} \to X^{n+1}
\end{equation*}
are then obtained by applying consecutively $\mathfrak R_n^{\oneD}$ and $\mathfrak P_n^{\oneD}$ along each coordinate direction of the grid.

Fixing a grid $\TVol^n$ with $N_n=(2^n+1)^3$ nodes $(x^1, \ldots, x^{N_n})$, we denote the Finite Element basis functions on $\TVol^n$ by $\phi_1^n,\ldots,\phi_{N_n}^n$. The Finite Element 
representation of $\imgVol$ in $X^N_{\threeD}$ is taken as $\imgVol^N=\sum_{j=1}^{N_N} \imgVol(x^j) \phi_j^N$ and in $X^n_{\threeD}$ as $\imgVol^n=\mathfrak R_{n+1}\cdots\mathfrak R_N\imgVol^N$ 
(note that we will denote discretized functions on grid level $n$ with a superscript $n$). Similarly, the discretized deformation is expressed as $y^n = \sum_{j=1}^{N_n} y^n_j \phi_j^n$ with nodal coefficients $y^n_j\in\R^3$.
The discretized version $\imgPln^n$ of $\imgPln$ on grid $\TPln^n$ is defined in an analogous way.

\paragraph{Discretized cost functional.}
The evaluation of the data term $\J^d$ and its derivatives requires the computation of $\F(\imgVol \circ y)$. On the discretized level, the convolution contained in $\F$ is computed with the help of the discrete Fourier transform (DFT). To this end, $\imgVol^n \circ y^n$ and the discretized convolution kernel $\chi^n$ are evaluated at all grid points, and the resulting grid functions are padded with zeros so as to emulate the DFT using the fast Fourier transform on a periodic grid (in our case using routines of the FFTW project \url{http://www.fftw.org/}). The resulting grid function specifies the nodal values of a multilinear Finite Element function, which is then restricted to the $x_1$-$x_2$-plane to yield the discretized forward operator $F^n(\imgVol^n \circ y^n)$.
Using second order Gaussian quadrature on each element, the discrete analogue of $\J^d$ is now computed as
\begin{align*}
  J^d_n[y^n] = A_n\sum_{q} w\twoD^q\, d( F^n(\imgVol^n \circ y^n)(q),\, \imgPln^n(q)) \,,
\end{align*}
where $A_n$ stands for the area of each element in $\TPln^n$, $w\twoD^q$ denotes the quadrature weights, and the sum is taken over all quadrature points $q$.
Similarly, the discretized regularizer is evaluated via
\begin{equation*}
R_n[y^n]=V_n\sum_qw\threeD^q\,W(\nabla y^n(q))
\end{equation*}
with $V_n$ the volume of each element in $\TVol^n$. Finally, $E^d_n=J^d_n+R_n$.

\paragraph{Discretized functional derivatives.}
For the numerical evaluation of $\partial\J^d$ we exploit that the adjoint operator to a convolution is the cross-correlation.
In more detail, for functions $f:\domainPln\to\R$, $g:\domainVol\to\R$ we have
\begin{multline*}
\int_\domainPln f(x)[\chi\ast g](x_1,x_2,0)\,\d x
=\int_\domainPln\int_\domainVol f(x)\chi((x_1,x_2,0)-y)g(y)\,\d y\,\d x\\
=\int_\domainVol\int_\domainPln f(x)\chi((x_1,x_2,0)-y)\,\d x\,g(y)\,\d y
=\int_\domainVol[\chi\diamond f](y)g(y)\,\d y\,,
\end{multline*}
where we used Fubini's theorem and $[\chi\diamond f](y)=[\chi(-\cdot,-\cdot,-y_3)\ast f](y_1,y_2)$ denotes the adjoint operator to the convolution evaluated in the $x_1$-$x_2$-plane.
Denoting by $\ast^n$ our discrete approximation of the convolution described previously,
our discrete approximation of $\diamond$ applied to two Finite Element functions $f^n\in X\twoD^n$ and $\chi^n\in X\threeD^n$ is computed as the Finite Element function
\begin{equation*}
\chi^n\diamond^nf^n=\chi^n\ast^n Bf^n\,,
\end{equation*}
where $B:X\twoD^n\to X\threeD^n$ is the operator copying all nodal function values from $\TPln^n$ into the $x_1$-$x_2$-plane of $\TVol^n$ and leaving all other nodal values $0$.
Using the above notation we can write
\begin{equation*}
\partial\J^d[y](\phi)
=\int_\domainVol\left[\phi \cdot (\nabla \imgVol)\circ y\right](x)\,\left[\chi\diamond\partial_1d(\F(\imgVol\circ y,\imgPln)\right](x)\,\d x\,.
\end{equation*}
Correspondingly, using second order Gaussian quadrature, the discretized derivative is calculated for each Finite Element basis function $\phi_j^{n,k}=e_k\phi_j^n$ (with $e_k$ the $k$\textsuperscript{th} Cartesian unit vector) as
\begin{equation*}
\partial J^d_n[y^n](\phi_j^{n,k})=V_n\sum_qw\threeD^q\left[\phi_j^{n,k} \cdot (\nabla \imgVol^n)\circ y^n\right]\!(q)\,[\chi^n\diamond^n(\partial_1 d(F^n(\imgVol^n \circ y^n, \imgPln^n))](q)\,.
\end{equation*}
The discretized analogue of $\partial\Reg$ can be written as
\begin{align*}
  \partial R_n[y^n](\phi_j^{k,n}) &= V_n \sum_{q} w\threeD^q \tr\left((\nabla\phi_j^{k,n})^T(q)\left[2c_1 \nabla y^n(q) - \left[c_2 \det(\nabla y^n(q))^{-2} + 2c_3(1-\det(\nabla y^n(q)))  \right] 
\mathrm{cof}(\nabla y^n(q)) \right]\right) \, ,
\end{align*}
and $\partial E^d_n=\partial J^d_n+\partial R_n$.

\paragraph{Discretized second derivatives.}
The second derivative of $\J^d$ can be written as the sum $\partial^2\J^d[y]=\mathcal H^\loc+\mathcal H^\nonloc$ of a local and a nonlocal linear operator,
which are discretized separately.
Indeed, while $\mathcal H^\loc(\phi,\psi)$ is nonzero only if $\phi$ and $\psi$ have overlapping support,
the sparsity of a matrix representation of $\mathcal H^\nonloc(\phi,\psi)$ depends on the 
size of the blurring kernel $\chi$ and will typically be very low, making the storage of the 
assembled matrix impractical. However, during our numerical optimization we will only apply iterative solvers like BiCGStab, which only require the evaluation of matrix-vector products.
Due to the tensor product structure of the integrand of $\mathcal H^\nonloc$, the application of the linear operator can be implemented efficiently as follows.
Again exploiting the relation between convolution and the operator $\diamond$ we can rewrite
\begin{equation*}
\mathcal H^\nonloc(\phi,\psi)
=\int_\domainVol\left[\chi\diamond\left(\partial_1^2d(\F(\imgVol\circ y),\imgPln)(\chi\ast[\psi\cdot(\nabla\imgVol)\circ y])(\cdot,\cdot,0)\right)\right]\!(x)\,
[\phi\cdot(\nabla\imgVol)\circ y](x)\,\d x\,.
\end{equation*}
Thus, given a Finite Element function $\psi^n\in(X\threeD^n)^3$, for each Finite Element basis function $\phi_j^{k,n}=e_k\phi_j^n$ we can compute the discretized analogue
\begin{multline*}
H^\nonloc_n(\phi_j^{k,n},\psi^n)
=V_n\sum_qw\threeD^q\big[\chi^n\diamond^n\big(\partial_1^2d(F^n(\imgVol^n\circ y^n),\imgPln^n)\\
(\chi^n\ast^n[\psi^n\cdot(\nabla\imgVol^n)\circ y^n])(\cdot,\cdot,0)\big)\big](q)\,
[\phi_j^{k,n}\cdot(\nabla\imgVol^n)\circ y^n](q)\,.
\end{multline*}
The operator $\mathcal H^\loc$ is discretized as
\begin{equation*}
H^\loc_n(\phi_j^{k,n},\phi_i^{k,n})
=V_n\sum_qw\threeD^q\left[\phi_j^{k,n}\cdot\,((\mathrm{Hess}^n\, \imgVol^n)\circ y^n) \, \phi_i^{k,n}\right](q)\,
\left[\chi^n\diamond^n\partial_1 d(F^n(\imgVol^n \circ y^n, \imgPln^n)\right](q)\,,
\end{equation*}
where $\mathrm{Hess}^n\,\imgVol^n$ is defined weakly as in mixed Finite Elements approaches.
Indeed, as an artefact of our discretization, the piecewise multilinear Finite Element function $\imgVol^n$ does not possess a weak second derivative
(part of its distributional second derivative is concentrated on the element boundaries),
yet second order information $\mathrm{Hess}\imgVol$ is helpful for the registration and should not be neglected in the discretization.
Thus we define $\mathrm{Hess}^n\imgVol^n$ via
\begin{equation*}
\int_\domainVol\tr\left(\psi^T\mathrm{Hess}^n\imgVol^n\right)\,\d x
=\int_{\partial\domainVol}n^T\psi\nabla\imgVol^n\,\d x-\int_\domainVol\mathrm{div}\psi\cdot\nabla\imgVol^n\,\d x
\qquad\text{for all }\psi\in(X\threeD^n)^{3\times3}\,,
\end{equation*}
where $n$ denotes the unit outward normal to $\partial\domainVol$.
This amounts to solving a linear system of the form $MV=LU\threeD$ for the nodal value vector $V$ of $\mathrm{Hess}^n\imgVol^n$
with $M$ a mass matrix, $L$ a stiffness matrix, and $U\threeD$ the vector of nodal values of $\imgVol^n$.
Note that the matrix representation of $H^\loc_n$ is just a weighted mass matrix.

\paragraph{Numerical optimization.}
We tested and compared the performance of gradient-based methods, in particular nonlinear conjugate gradient and quasi-Newton methods, and a second order line search or trust region Newton method.
Below we provide a few details on the latter (the implementation of the former being straightforward).

The line search Newton method for the numerical minimization of $E^d_n$ over $(X\threeD^n)^3$ takes the form
\begin{align*}
  y_{k+1}^n &= y_k^n - \gamma_k(\partial^2 E^d_n[y_k^n])^{-1} \partial E^d_n[y_k^n]\,, \quad k \geq 0\, ,
\end{align*}
where we compute the step size $\gamma_k>0$ using a backtracking line search with Armijo's condition
and where the inverse linear operator is applied using BiCGStab (note that due to the lack of sparsity in $H^\nonloc_n$, the linear system has to be solved iteratively).
However, while $H^\nonloc_n$ is always positive semidefinite, $H^\loc_n$ and $\partial^2 R_n$ can be indefinite so that $\partial^2E^d_n$ may be so as well.
Consequently, the Newton step may not be a descent direction, and the iteration might converge to a saddle point.
To compensate a possible lack of positive definiteness we add a scalar multiple $\lambda_k$ of the identity to the Hessian operator,
\begin{align*}
  y_{k+1}^n &= y_k^n - \gamma_k(\partial^2 E^d_n[y_k^n] + \lambda_k \mathrm{id})^{-1} \partial E^d_n[y_k^n]\,,\quad k \geq 0\,, \label{ModifiedNewton}
\end{align*}
where $-\lambda_k$ should approximate the most negative eigenvalue of $H^\loc[y_k^n] + \partial^2 R_n[y_k^n]$. To determine $\lambda_k$, we use the procedure described in 
\cite[Sct.\,8.5.2, Thm.\,8.5.1]{Ref:GolubLoan}, which consists of a number of truncated Lanczos iterations to obtain a symmetric tridiagonal 
approximation $T\in\R^{m\times m}$ to $H^\loc[y_k^n] + \partial^2 R_n[y_k^n]$ and a subsequent computation of its characteristic polynomial,
whose smallest zero is found via a bisection method.

\begin{figure}
\def\angle{0}
\def\radius{3}
\def\cyclelist{{"orange","blue","red","green"}}
\newcount\cyclecount \cyclecount=-1
\newcount\ind \ind=-1
\begin{tabular}{cc}
\begin{subfigure}[t]{0.5\textwidth}\centering\includegraphics[width=0.9\columnwidth]{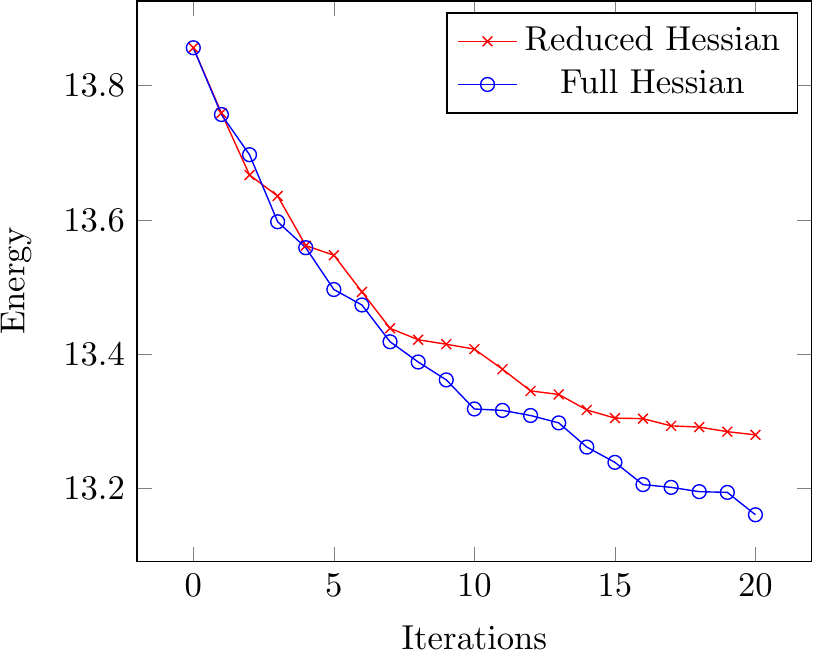}
\caption{Energy decay in Newton's method using the full Hessian and the approximation $\partial^2 E^d_n \approx H^\loc + \partial^2 
R_n$.}\label{Fig:NewtonAndReducedNewton}\end{subfigure} &
\begin{subfigure}[t]{0.46\textwidth}
\begin{tikzpicture}[nodes = {font=\sffamily}, scale=0.75, transform shape]
  \foreach \percent/\name in {
      75.6/Solver,
      13.6/Computation of the Hessian object,
      6.4/Armijo line search,
      4.1/Computation of the gradient object,
      0.3/Other
    } {
      \ifx\percent\empty\else               
        \global\advance\cyclecount by 1     
        \global\advance\ind by 1            
        \ifnum3<\cyclecount                 
          \global\cyclecount=0              
          \global\ind=0                     
        \fi
        \pgfmathparse{\cyclelist[\the\ind]} 
        \edef\color{\pgfmathresult}         
        \draw[fill={\color!50},draw={\color}] (0,0) -- (\angle:\radius)
          arc (\angle:\angle+\percent*3.6:\radius) -- cycle;
        \node at (\angle+0.5*\percent*3.6:0.7*\radius) {\percent\,\%};
        \node[pin=\angle+0.5*\percent*3.6:\name]
          at (\angle+0.5*\percent*3.6:\radius) {};
        \pgfmathparse{\angle+\percent*3.6}  
        \xdef\angle{\pgfmathresult}         
      \fi
    };
\end{tikzpicture}
\caption{Computational costs of each step of Newton's method.}\label{fig:CompComplexity}
\end{subfigure}
\end{tabular}
\caption{ Performance of Newton's method using a fully and partially assembled Hessian operator (left) and a breakdown of the computational costs of a generic step of Newton's method (right). }
\label{Fig:NewtonAndVariantAndComplexity}
\end{figure}

Further modifications of the Newton iteration allow a further reduction of the computational complexity of each Newton step. Since $\partial_1\d(\F(\imgVol\circ y), 
\imgPln)$ is zero for perfectly aligned images, the contribution of $H^\loc$ for closely aligned images is negligible and $\partial^2 J^d_n$ can be 
approximated by $H^\nonloc + \partial^2R_n$.
We refer to \cite{ModersitzkiRuthottoGreif} for a detailed discussion of this strategy in the context of hyperelastic 3D-3D image registration.
\Cref{Fig:NewtonAndReducedNewton} compares the energy decrease of Newton's method and this 
modification when applied to the datasets shown in \Cref{Fig:SyntheticVesselStructure}.

\Cref{fig:CompComplexity} shows that the major cost of each Newton iteration lies in the numerical solution of the linear system.
To improve convergence of the BiCGStab solver, we tested several preconditioning methods. Jacobi, geometric scaling, and incomplete LU preconditioning -- applied to the part
$H^\loc[y_k^n] + \partial^2 R_n[y_k^n]$ of the Hessian matrix that can be assembled -- turned out to be inferior to a multigrid preconditioner 
applied to the entire Hessian operator $\partial^2 E^d_n$. 
Note that this operation does not require the assembly of $\partial^2 E^d_n$, since iterative solvers like BiCGStab or GMRES -- this time without preconditioning -- 
can be employed for the pre- and postsmoothing steps.

A line search Newton method is prone to getting stuck or at least slowing down at saddle points
(even despite the compensation for indefiniteness).
This can be avoided by using a trust region Newton method,
which can also minimize indefinite quadratic functions within its trust region.
To this end we solved the Newton system via preconditioned truncated Lanczos iterations \cite{gould1999solving, zhang2017generalized}
(since this simultaneously allows to use the above-mentioned technique for compensating indefiniteness),
where we applied the same preconditioners as in the line search approach.

Overall, as already suggested by \cref{fig:CompComplexity},
the solution of the linear system in Newton's method (line search or trust region) turns out to consume so much time
that a mere quasi-Newton method with BFGS updates (see e.\,g.~\cite[Ch.\,6.1]{nocedal2006numerical}) is more efficient.
In fact, a plain conjugate gradient descent with Polak--Ribi\'ere updates (see e.\,g.~\cite[Sct.\,8.5]{ciarlet1989introduction}) performed best in our experiments.

\paragraph{Preprocessing.}
Before starting the minimization of $E^d_n$ we rigidly align $\imgVol$ to $\imgPln$ by minimizing $J^d_n[y^n]$ among all rigid deformations (for which actually $E^d_n=J^d_n$).
In fact, to accommodate a potential slight mismatch in magnification between the 2D intravital and the 3D confocal microscopy as well as different resolutions in $x_1$-, $x_2$-, and $x_3$-direction (or to make up for a bad choice of the blurring kernel $\chi$),
we additionally allow a rescaling along the coordinate directions.
Thus, we minimize $J^d_n$ among all deformations
\begin{align*}
 x & \mapsto R_3(\gamma)\,R_2(\beta)\,R_1(\alpha)\,\mathrm{diag}(s_1, s_2, s_3)\, x + t\, ,
\end{align*}
parameterized by a translation vector $t\in\R^3$ as well as scalings $s_1,s_2,s_3$ along and rotation angles $\alpha,\beta,\gamma\in [0, 2\pi[$ about the three coordinate directions
($R_i(\delta)\in SO(3)$ denotes the rotation about the $i$\textsuperscript{th} axis by angle $\delta$).
We now use the grid hierarchy $\TVol^n$, $n=1,\ldots,N$, to iteratively find the optimal parameters for each grid level $n$
via a quasi-Newton method initialized with the optimal parameters from level $n-1$.
After the optimal deformation on level $N$ is found, we replace $\imgVol$ by its composition with that deformation
so that the new $\imgVol$ now has the correct length scales and already is optimally aligned.

\paragraph{Multilevel optimization problems.}
We have already detailed how by replacing $\J^d$, $\Reg$, and $\E^d$ with discretized analogues $J^{d}_n$, $E^d_n$, and $R_n$ we arrive at a set of optimization problems
\begin{align*}
  \min_{y^n\in (X_{\threeD}^n)^3} E^{d}_n[y^n]\,, \quad n=1, \ldots, N\, ,
\end{align*}
that are numerically solved using the nonlinear conjugate gradient method.
As so often in image registration methods, the use of a multilevel approach is essential for the quality of the results as well as for computational efficiency.
Owing to its nonconvexity, the functional $\E^d$ can be expected to have a large number of local minima, which is in general linked to
the resolution of the given image data and the number of image features that promote regional alignment.
Downsampling of the image data reduces the number of image features (and thus of local minima) in the input datasets as well as
the complexity of the optimization procedure (see the experimental illustration in \cref{fig:multiscale}). The results of the less costly optimization on coarser grids can then be used as good initial 
values for the higher level optimization problems.
We make use of this strategy by first minimizing $E^d_n$ on a low grid level $n$ and then successively solving the optimization problem on higher grid levels.

In addition to the use of hierarchical grids we will use a smoothing-based multiscale strategy.
A drawback of local, pixel-based distance measures is their inability to align corresponding but non-overlapping image features.
As illustrated in \cref{fig:multiscale}, blurring of the datasets
can compensate the lack of overlap at the expense of a diminished data fidelity.
We therefore solve the registration problem for blurred versions of $\imgPln$ and $\imgVol$ with successively decreasing blur radius.
In our implementation, we convolved $\imgPln$ and the 
slices of $\imgVol$ in $x_1$-$x_2$-direction with a Gaussian kernel $K_s(x) = \frac{1}{s\sqrt{2\pi}} \exp(-\frac{x^2}{2s^2})$ via the fast Fourier transform.
Experimentally, a good sequence of decreasing kernel radii turned out to be $s = 8h, 4h, 2h, 0$, where $h$ denotes the grid width of the 
volumetric input image.

\begin{figure}
\setlength\unitlength{.1\linewidth}
\setlength\tabcolsep{2pt}
\begin{tabular}{cccccc}
\rotatebox{90}{$\;\;\;\;\imgPln$}&
\includegraphics[width=\unitlength]{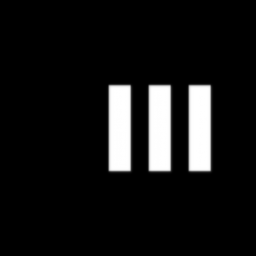} &
\includegraphics[width=\unitlength]{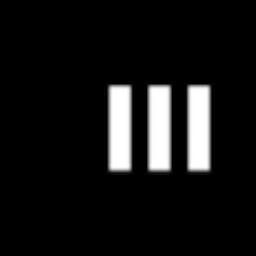} &
\includegraphics[width=\unitlength]{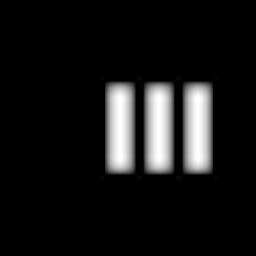} &
\includegraphics[width=\unitlength]{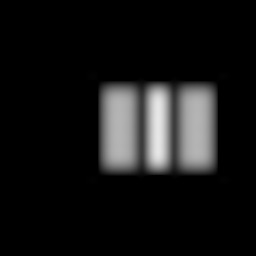} &
\includegraphics[width=\unitlength]{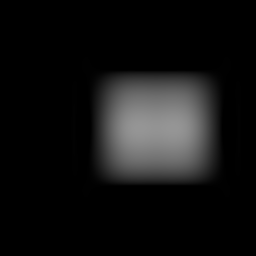} \\
\rotatebox{90}{$\;\;\F(\imgVol)$}&
\includegraphics[width=\unitlength]{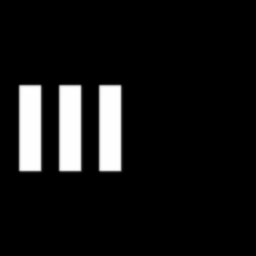} &
\includegraphics[width=\unitlength]{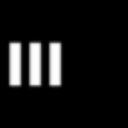} &
\includegraphics[width=\unitlength]{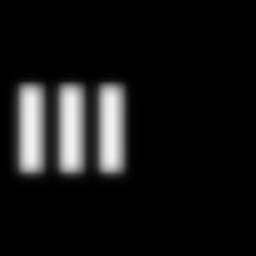} &
\includegraphics[width=\unitlength]{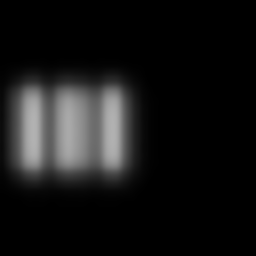} &
\includegraphics[width=\unitlength]{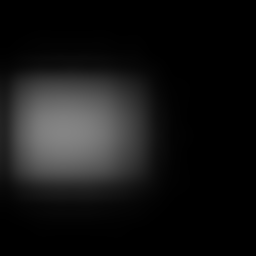} \\
\rotatebox{90}{\!\!\!res.}&
\small$\underset{\rule{7ex}{1pt}}{129\times129}$&
\small$\underset{\color{orange}\rule{7ex}{1pt}}{65\times65}$&
\small$\underset{\color{red}\rule{7ex}{1pt}}{33\times33}$&
\small$\underset{\color{blue}\rule{7ex}{1pt}}{17\times17}$&
\small$\underset{\color{green}\rule{7ex}{1pt}}{9\times9}$
\end{tabular}
\hfill
\begin{picture}(4,0)(0,.8)
\put(0,0){\includegraphics[width=4\unitlength]{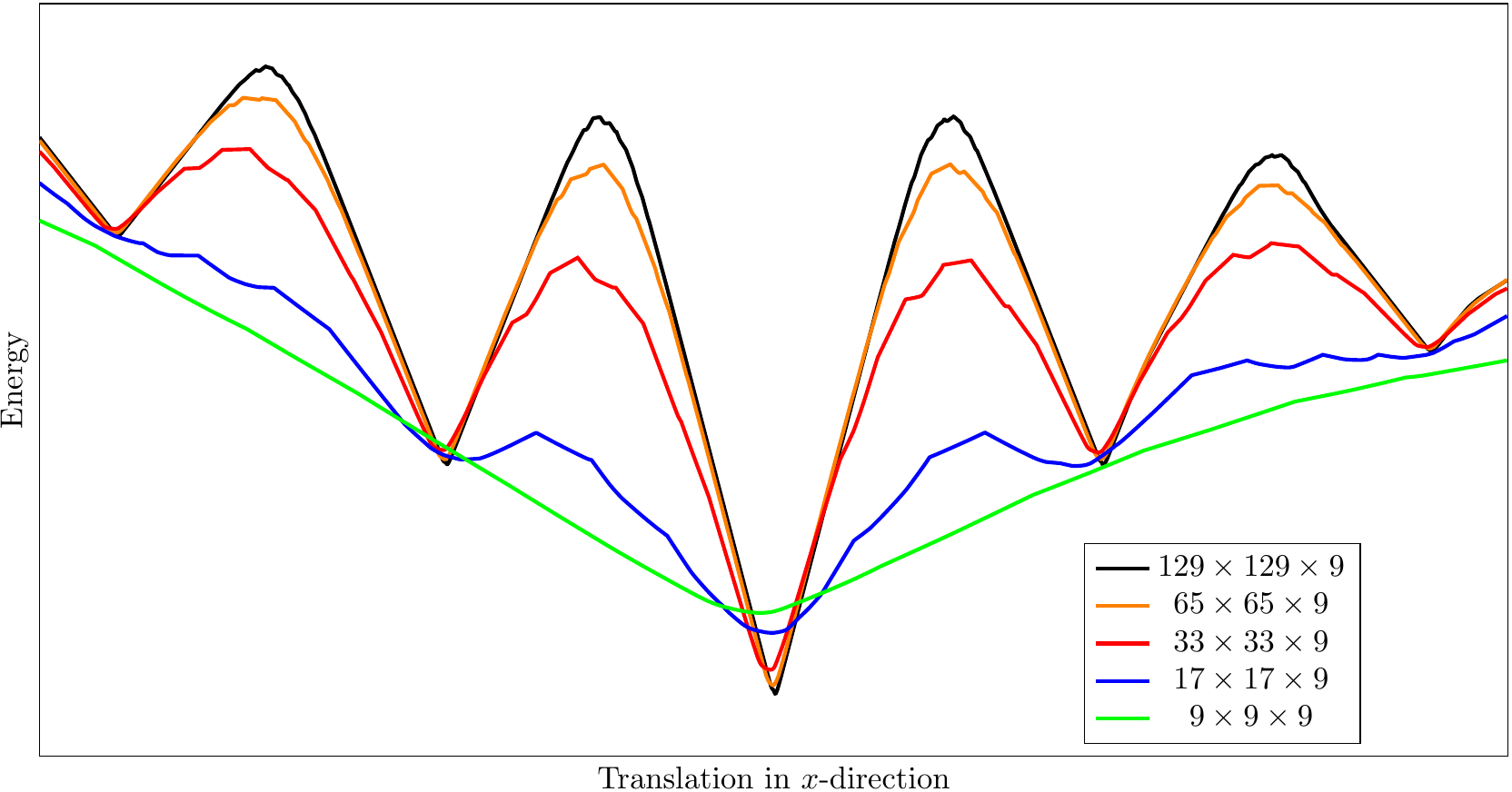}}
\put(0,0){\color{white}\rule{4\unitlength}{.9ex}}
\put(0,0){\color{white}\rule{1ex}{2\unitlength}}
\put(2.8,.11){\color{white}\rule{1\unitlength}{.6\unitlength}}
\put(0,-.08){\makebox[4\unitlength][c]{\small horizontal translation $t$}}
\put(-.15,.8){\rotatebox{90}{\small$\J^{d_1}[y_t]$}}
\end{picture}
\\[1.5\baselineskip]
\begin{tabular}{cccccc}
\rotatebox{90}{$\;\;\;\;\imgPln^s$}&
\includegraphics[width=\unitlength]{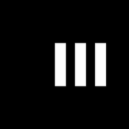} &
\includegraphics[width=\unitlength]{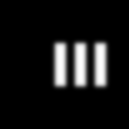} &
\includegraphics[width=\unitlength]{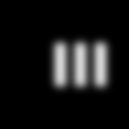} &
\includegraphics[width=\unitlength]{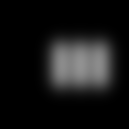} \\
\rotatebox{90}{$\;\;\F(\imgVol^s)$}&
\includegraphics[width=\unitlength]{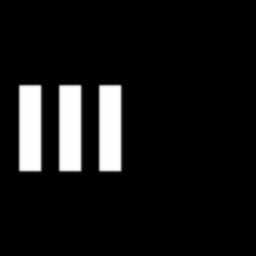} &
\includegraphics[width=\unitlength]{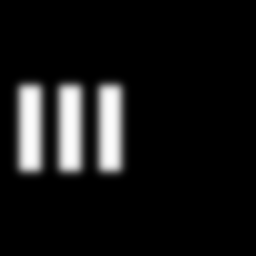} &
\includegraphics[width=\unitlength]{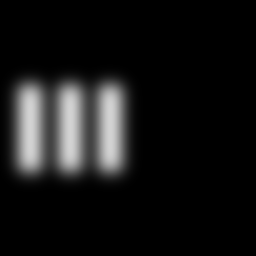} &
\includegraphics[width=\unitlength]{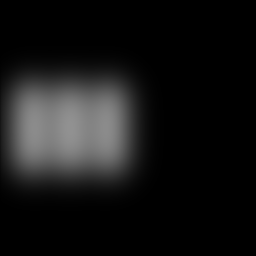} \\
\rotatebox{90}{\small\!\!\!\!\!\!blur}&
\small$\underset{\rule{7ex}{1pt}}{s=0}$&
\small$\underset{\color{red}\rule{7ex}{1pt}}{s=h}$&
\small$\underset{\color{blue}\rule{7ex}{1pt}}{s=2h}$&
\small$\underset{\color{green}\rule{7ex}{1pt}}{s=4h}$
\end{tabular}
\hfill
\begin{picture}(4,0)(0,.8)
\put(0,0){\includegraphics[width=4\unitlength]{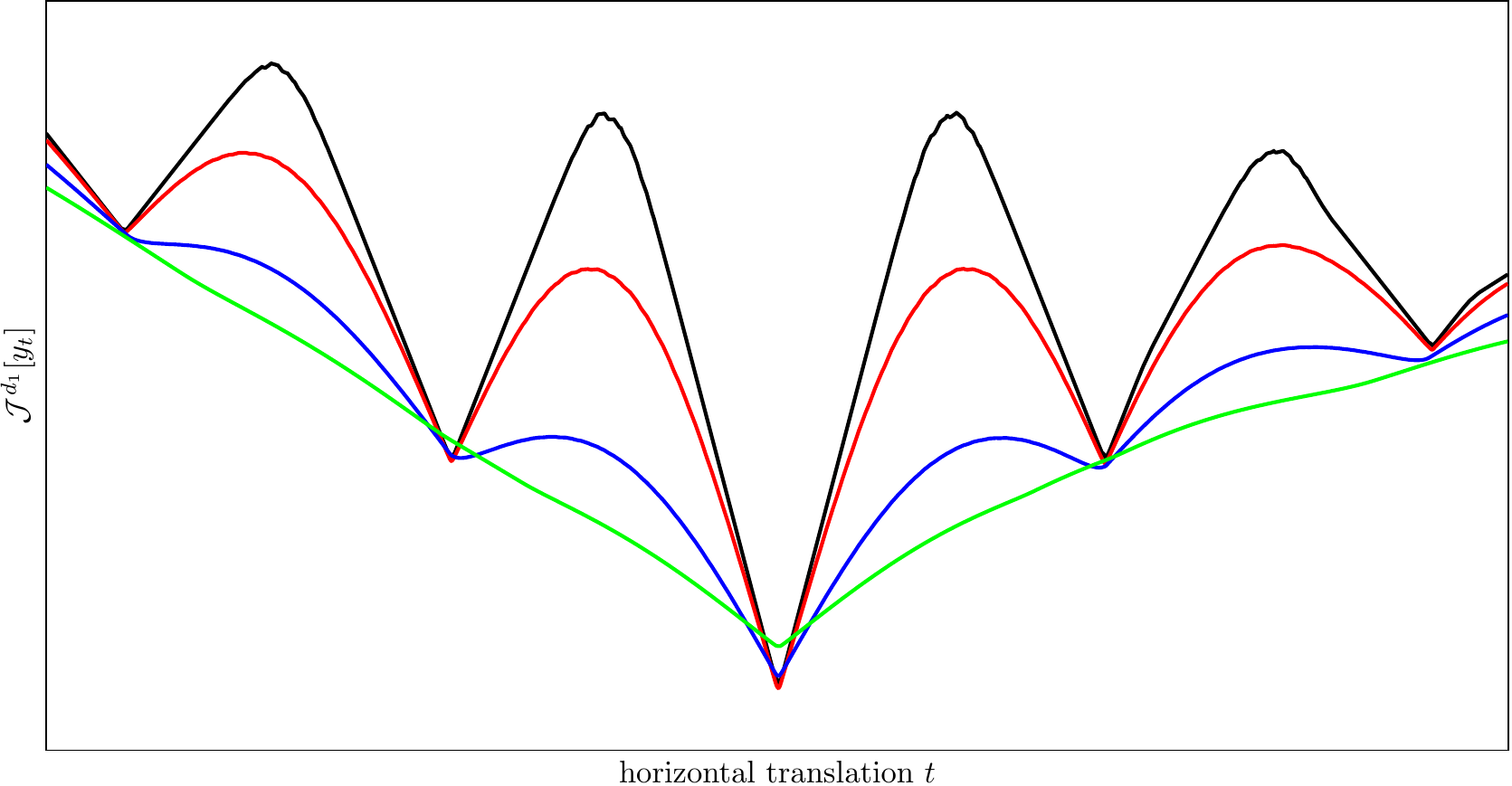}}
\put(0,0){\color{white}\rule{4\unitlength}{.9ex}}
\put(0,0){\color{white}\rule{1ex}{2\unitlength}}
\put(2.8,.11){\color{white}\rule{1\unitlength}{.6\unitlength}}
\put(0,-.08){\makebox[4\unitlength][c]{\small horizontal translation $t$}}
\put(-.15,.8){\rotatebox{90}{\small$\J^{d_1}[y_t]$}}
\end{picture}
\caption{Multigrid and multiscale strategies simplify the energy landscape and thereby facilitate registration:
Both top and bottom experiment use a three-dimensional image $\imgVol$ consisting of three cuboids at a resolution of $256\times256\times9$ pixels
and take the two-dimensional image $\imgPln$ as the projection of a simple translation, $\imgPln=\F(\imgVol\circ y_{0.35})$ for $y_t(x)=(x_1,x_2,x_3+t)$
(note that the image width of $256$ pixels corresponds to length $1$).
The right graphs show the registration energy as a function of the translation $t$ in the range $[0,0.7]$,
where the different colours correspond to the different resolutions or blurs shown on the left.
In the top experiment, the $x_1,x_2$-resolution of both $\imgVol$ and $\imgPln$ was decreased repeatedly using the restriction operators $\mathfrak R_n$,
while in the bottom experiment the images are obtained by blurring in $x_1,x_2$-direction with the Gaussian kernel $K_s$ of scale $s$, $\imgPln^s=K_s*\imgPln$, $\imgVol^s=K_s*\twoD\imgVol$.
Clearly, spurious local minima are alleviated or even eliminated at coarser resolution or stronger blur.
}
\label{fig:multiscale}
\end{figure}

\begin{figure}
\centering
\setlength\unitlength{.1\linewidth}
\begin{picture}(5,1)
\put(0,0){\includegraphics[width=2\unitlength]{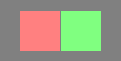}}
\put(3,0){\includegraphics[width=2\unitlength]{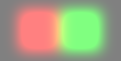}}
\multiput(4,.3)(0,.1){5}{\vector(1,0){.2}}
\end{picture}
\caption{Left: A reference image $u_r$ (red) and a template image $u_t$ (green) show the same structure at different locations.
Due to the lack of overlap, the gradient of the registration functional $y\mapsto\int_{\R^2}d(u_t\circ y,u_r)\,\d x$ at $y=\id$ is zero.
Right: After blurring both images the structures overlap, resulting in a nonzero gradient.
The negative gradient, which points into the direction of an improved registration, can be interpreted as a perturbation of $y=\id$
which produces a slight rightward deformation in the overlapping region, as indicated by the arrows.}
\label{fig:multiscale}
\end{figure}

\paragraph{Full algorithm.}
The full algorithmic workflow is depicted in \cref{Fig:Workflow}.
Note that the iteration over the different grid levels always starts with that level $n$ as the coarsest one on which the grid size corresponds to the current blurring kernel radius.
The actual implementation was based on the \emph{QuocMesh Library}, a C\texttt{++} Finite Element library which supports quadratic, cuboid and simplicial elements.

\begin{figure}
\definecolor{ColBlock}{HTML}{8BA0AB}    
\definecolor{ColDecision}{HTML}{294B5E}    
\definecolor{ColCloud}{HTML}{16394C}
\centering

\tikzstyle{decision} = [diamond, draw, fill={ColDecision}, text width=6.5em, text badly centered, node distance=3cm, inner sep=0pt, text=white]
\tikzstyle{line} = [draw, -latex']
\tikzstyle{cloud} = [draw, ellipse, fill={ColCloud}, node distance=6cm, minimum height=3em, text=white]

\tikzstyle{block} = [rectangle, draw, fill={ColBlock}, text width=12em, text centered, rounded corners, minimum height=4em, text=white]

\scalebox{0.825}{
\begin{tikzpicture}[node distance = 4cm, auto]
    \node [block] (init) {Create hierarchical image data representation};
    \node [block, below of=init, node distance=3.5cm] (preprocessing) {Preprocessing (rigid registration + scaling)};
    \node [cloud, below of=preprocessing, node distance=2.5cm, text width = 8.5em, align=center] (rigid_out) {Rigid intermediate result};
    \node [block, right of=preprocessing, text width=7.5em,  node distance=5cm] (start_elastic) {Elastic registration};
    \node [decision, right of=start_elastic, node distance=4.0cm] (decide) {Artificial blurring involved in previous step?};
    \node [block, above of=decide, node distance=3.5cm,  text width=6.5em] (decrease_radius) {Decrease artificial blur};
    \node [block, right of=decide, node distance=3.5cm, text width=4.5em] (stop) {Stop};
    \node [cloud, below of=stop, node distance=2.5cm, align=center, text width=5.5em] (final_out) {Registration result};
    \path [line] (init) -- (preprocessing);
    \path [line] (preprocessing) -- node[below=0.75cm] {apply computed deformation} (start_elastic);
    \path [line] (start_elastic) -- (decide);
    \path [line] (decide) -- node {no}(stop);
    \path [line, thick, dashed] (preprocessing) -- (rigid_out);
    \path [line, thick, dashed] (stop) -- (final_out);
    
    \Loop[dist=1.5cm,dir=NO, label=\mbox{iterate over grid levels},labelstyle=above left](preprocessing);
    \Loop[dist=1.5cm,dir=NO, label=\mbox{iterate over grid levels},labelstyle=above left](start_elastic);
    \path [line] (decide) --  node {yes} (decrease_radius);
    \path [line] (decrease_radius) -| node[above] {update initial deformation} (start_elastic);
\end{tikzpicture}
}
\caption{Schematic overview of the overall registration procedure.}
\label{Fig:Workflow}
\end{figure}
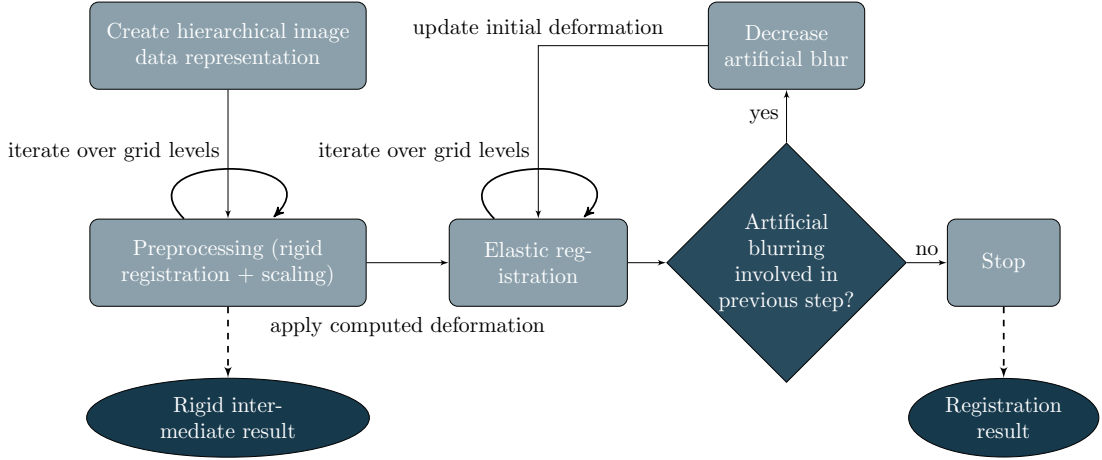

\section{Experimental results}\label{sec:results}

In all following examples we took $\domainVol= [0,1]^2 \times [0,Z]$ for some $Z \in ]0, 1]$ and used the blurring kernel
\begin{align*}
  \chi(x) = \begin{cases}
                (\pi\, [x_3-\frac{Z}{2}]^2)^{-1} & \sqrt{x_1^2+x_2^2} \leq \left|x_3 - \frac{Z}{2}\right|, \\
                0 & \text{ else,}
               \end{cases}
\end{align*}which corresponds geometrically to a double cone.

\paragraph{Synthetic data.} To test the performance of the elastic regularizer, we applied our technique to synthetic datasets (\crefrange{Fig:ToyExampleInplane}{Fig:SyntheticVesselStructure}), representing cuboids and a deformed vessel structure made of some elastic material.

In the first two test cases (\crefrange{Fig:ToyExampleInplane}{Fig:ToyExampleZDislocation}), the simplicity of the shapes made it possible to generate 
the three-dimensional deformed and undeformed scenes $\imgVol$ and $\imgVol\circ y$ by setting the pixel intensities manually. The two-dimensional reference images $\imgPln$ were then obtained by applying the forward 
operator to the undeformed scenes. \Cref{Fig:ToyExampleInplane} depicts the results of a test assessing the algorithm's ability to compute 
non-rigid lateral deformations. The resolutions of $\imgVol$ and $\imgPln$ in this example are $129\times 129 \times 17$ and $129 \times 129$. \Cref{Fig:ToyExampleInplane} shows, that the overall structural 
alignment works flawlessly, but also that small-scale spurious deformations can be introduced locally which have negligible influence on the data fidelity term.

\begin{figure}
\center
\begin{tabular}{p{.23\textwidth}p{.23\textwidth}p{.23\textwidth}p{.23\textwidth}}
\includegraphics[width=.23\textwidth]{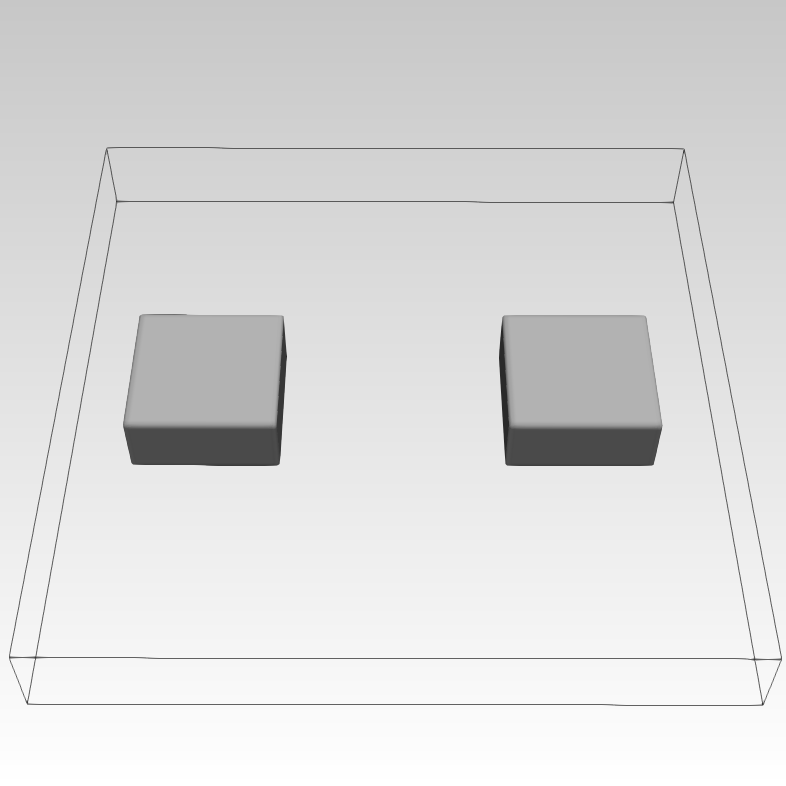}
\mbox{Initial\,configuration\,$\imgVol$} (perspective view) &
\includegraphics[width=.23\textwidth]{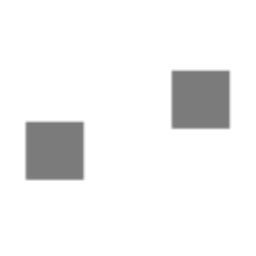}
Two-dimensional reference image $\imgPln$ &
\includegraphics[width=.23\textwidth]{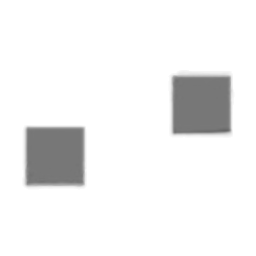}
Projected registration result $\F(\imgVol\circ y)$ &
\includegraphics[width=.23\textwidth]{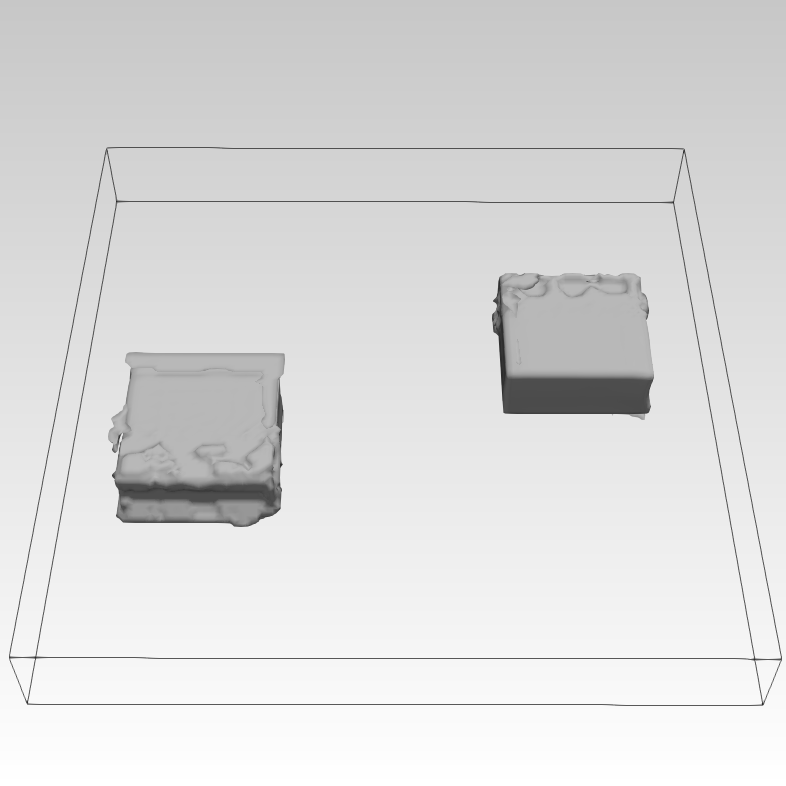}
\mbox{Registration\,result\,$\imgVol\!\circ\!y$} (perspective view)
\end{tabular}
\caption{Inplane deformation of a pair of elastic cuboids. In this experiment and the one shown in \cref{Fig:ToyExampleZDislocation}, the parameters of the stored energy function $W$ were chosen to make the two cuboids
behave like they were embedded in some soft, gel-like substance.}
\label{Fig:ToyExampleInplane}
\end{figure}

The second test, shown in \cref{Fig:ToyExampleZDislocation}, is intended to evaluate how well the algorithm infers the displacement in viewing direction from the blurriness of the two-dimensional image.
To this end we use a configuration of two cubes, where the deformed scene differs from the 
undeformed configuration only by a vertical displacement of one cube.
The underlying resolutions are $129\times 129 \times 129$ and $129 \times 129$, respectively. As \cref{Fig:ToyExampleZDislocation} shows, the left cube is correctly displaced in viewing direction, but its initial shape is 
not entirely preserved. The lack of smoothness of the actually applied transformation
\begin{align*}
 x &\mapsto \begin{cases}
                   \left(x_1,x_2,x_3+\frac{1}{3}\right), &\text{if}\  0 \leq x_1 \leq \frac{1}{2}\\
                   x,& \mathrm{else,}
                  \end{cases}
\end{align*}
which is incompatible with the regularizer $\Reg$,
explains this phenomenon. As the hyperelastic regularizer favours more regular deformations, its contribution to the overall energy will outweigh those of the data term, if, as in this case, a transformation introduces too much shear.

\begin{figure}
\center
\begin{tabular}{p{.23\textwidth}p{.23\textwidth}p{.23\textwidth}p{.23\textwidth}}
\includegraphics[width=.23\textwidth]{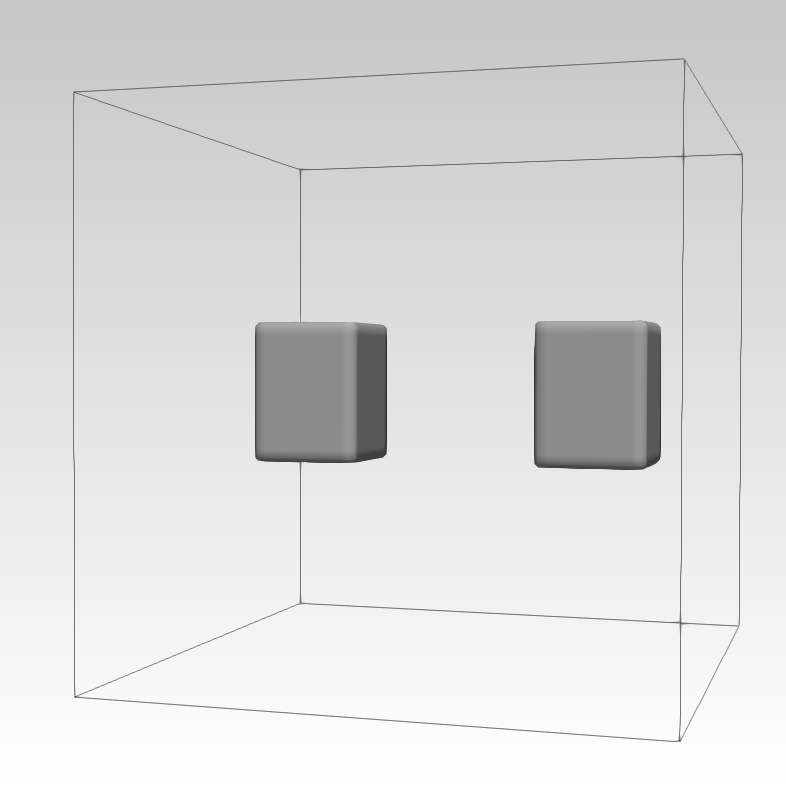}
\mbox{Initial\,configuration\,$\imgVol$} (side view) &
\includegraphics[width=.23\textwidth]{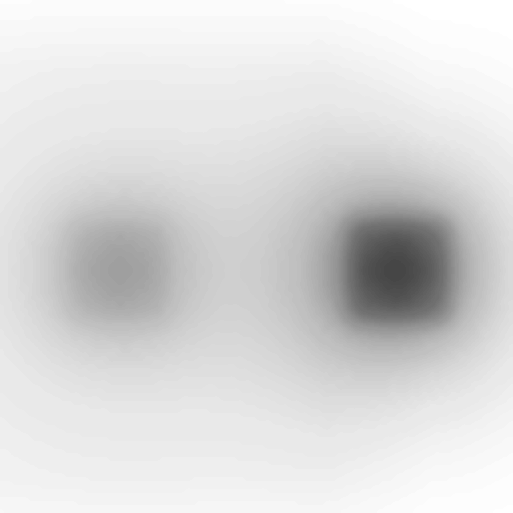}
Two-dimensional reference image $\imgPln$ &
\includegraphics[width=.23\textwidth]{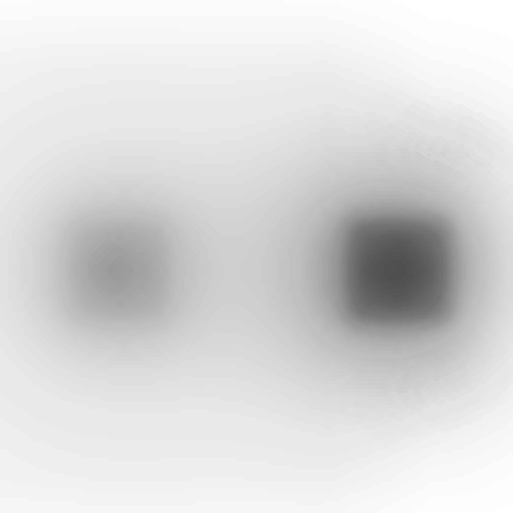}
Projected registration result $\F(\imgVol\circ y)$ &
\includegraphics[width=.23\textwidth]{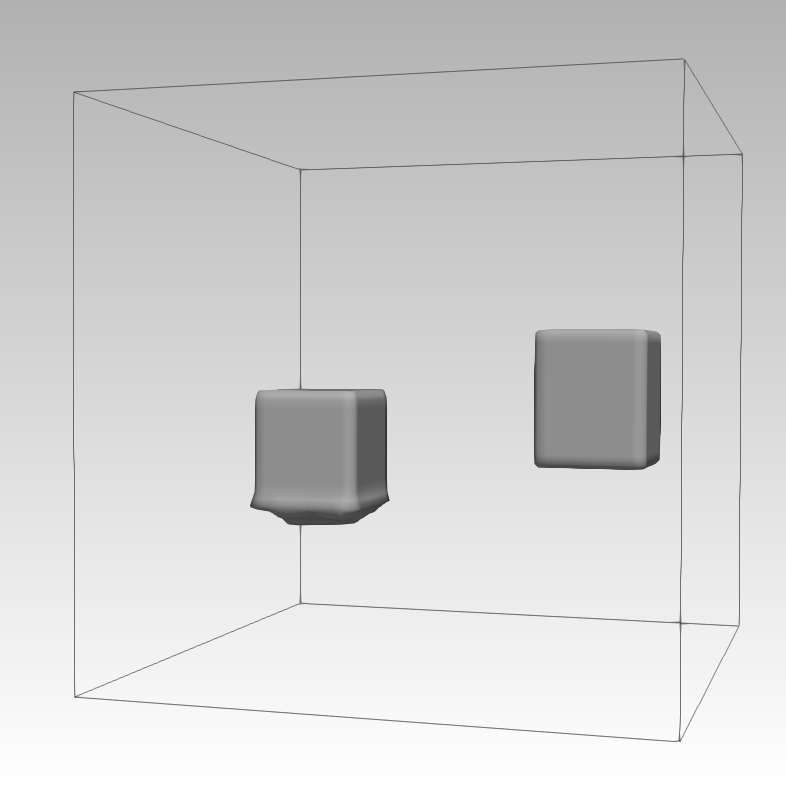}
\mbox{Registration\,result\,$\imgVol\!\circ\!y$} (side view)
\end{tabular}
\caption{Displacement of one of a pair of elastic cubes, surrounded by a soft material.
}
\label{Fig:ToyExampleZDislocation}
\end{figure}

Since vessel structures constitute the predominant image features in our microscope images, we apply the algorithm to another, more realistic test case to see 
whether branched tube-like structures are registered equally well as in the previous cases. The synthetic dataset shown in 
\cref{Fig:SyntheticVesselStructure} was generated with the help of \texttt{VascuSynth} \cite{VascuSynth2010}, a software package 
capable of generating realistically looking synthetic vessel structures. To obtain a volumetric template dataset $\imgVol$, the generated vessel structure was deformed 
using a CGAL \cite{Ref:CGAL} implementation of the algorithm described in \cite{Sorkine2007}, which is capable of generating triangular 
mesh deformations in real-time under the constraint that the resulting deformation acts as rigidly as possible on each triangle.
The deformed and undeformed triangular meshes were then turned into 
grayscale image stacks. As in the previous tests, the application of the forward operator to the undeformed volume image stack generated 
the two-dimensional reference image $\imgPln$. The dimensions of the input datasets were the same as in our first test case. It turned out that in this example, 
a preprocessing step as described in \cref{sec:numerics}, preceding the elastic registration procedure, was necessary to obtain a satisfactory data alignment. 
The comparison of the reference image $\imgPln$ and the projected registration result $\F(\imgVol\circ y)$ in \cref{Fig:SyntheticVesselStructure} indicates a faultless overall alignment, but 
spurious small-scale deformations similar to those encountered in the first test case.

\begin{figure}
\center
\begin{tabular}{p{.23\textwidth}p{.23\textwidth}p{.23\textwidth}p{.23\textwidth}}
\includegraphics[width=.23\textwidth]{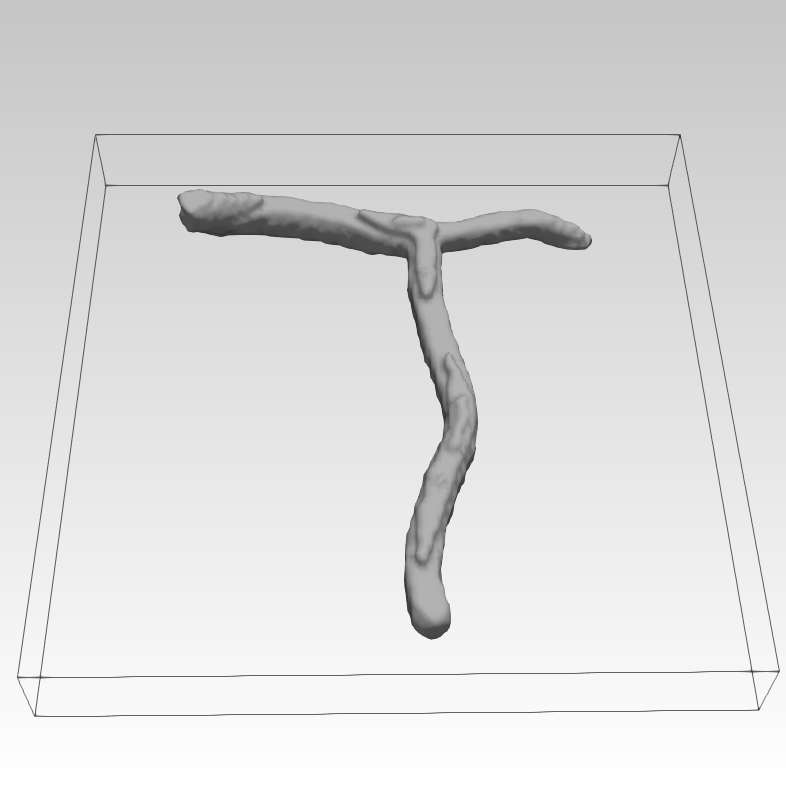}
\mbox{Initial\,configuration\,$\imgVol$} (perspective view) &
\includegraphics[width=.23\textwidth]{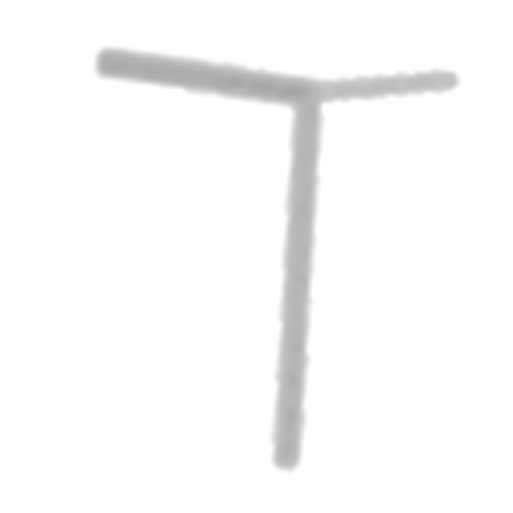}
Two-dimensional reference image $\imgPln$ &
\includegraphics[width=.23\textwidth]{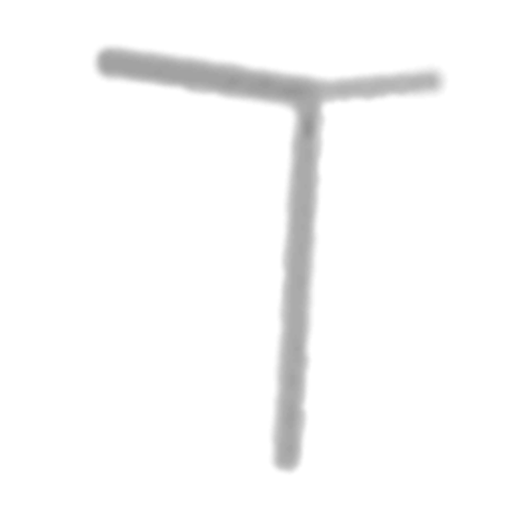}
Projected registration result $\F(\imgVol\circ y)$ &
\includegraphics[width=.23\textwidth]{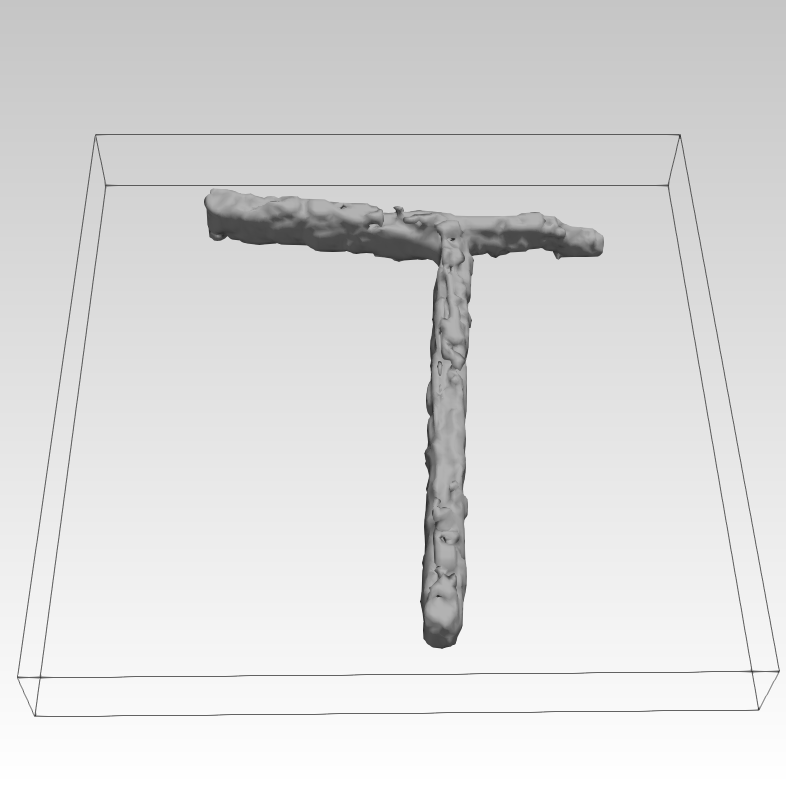}
\mbox{Registration\,result\,$\imgVol\!\circ\!y$} (perspective view)
\end{tabular}
\caption{Idealized vessel structure used as a more realistic test case.}
\label{Fig:SyntheticVesselStructure}
\end{figure}

\paragraph{Microscopy data.}
In \cref{Fig:MicroscopyImages} the technique was finally applied to a microscopy dataset as described in the introduction.

\begin{figure}
\center
\begin{tabular}{p{.23\textwidth}p{.23\textwidth}p{.23\textwidth}p{.23\textwidth}}
\includegraphics[width=.23\textwidth]{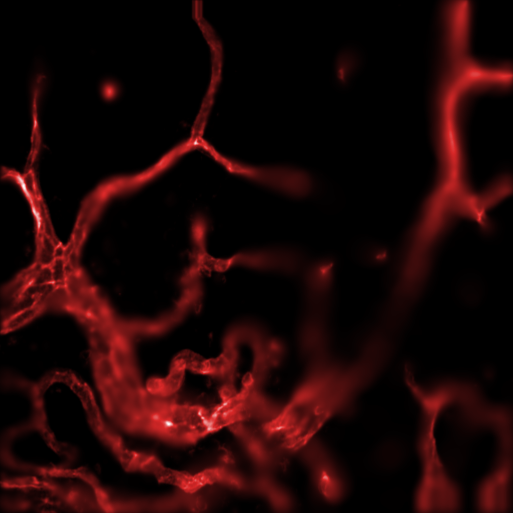}
Projected template image $\imgVol$ &
\includegraphics[width=.23\textwidth]{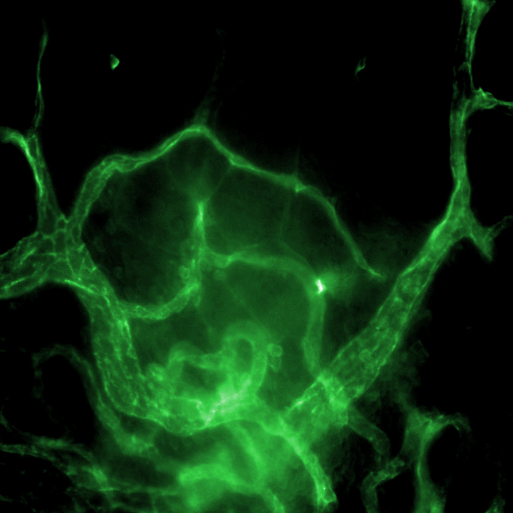}
Two-dimensional reference image $\imgPln$ &
\includegraphics[width=.23\textwidth]{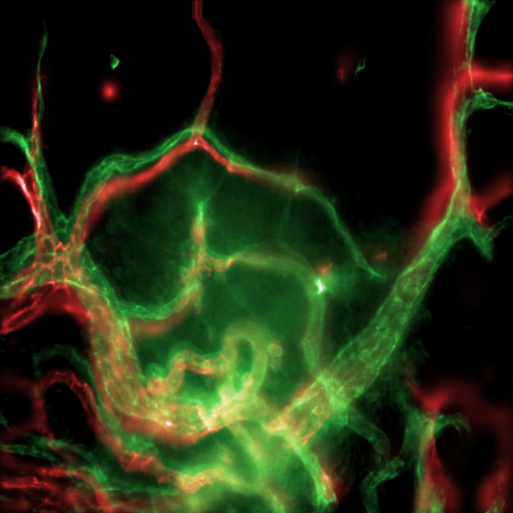}
Overlay of $\imgVol$ and $\imgPln$ &
\includegraphics[width=.23\textwidth]{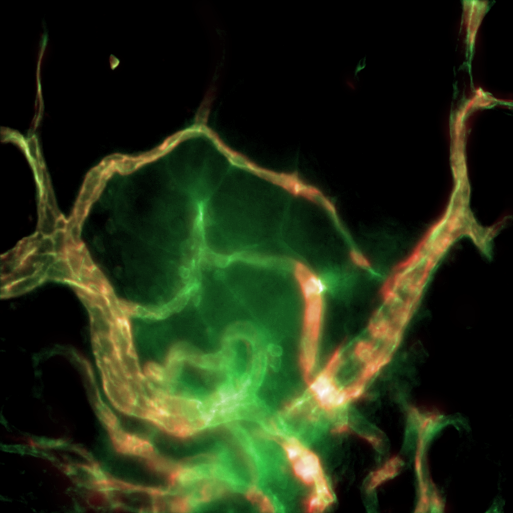}
Overall registration result
\end{tabular}
\caption{Microscopy images acquired with 3D confocal and 2D IVM microscopy and their elastic registration. Data courtesy of Lydia Sorokin, Konrad Buscher, Jian Song (Institute of Physiological Chemistry and Pathobiochemistry, M\"unster, Germany).}
\label{Fig:MicroscopyImages}
\end{figure}

\begin{figure}
\center
\includegraphics[width=.23\textwidth]{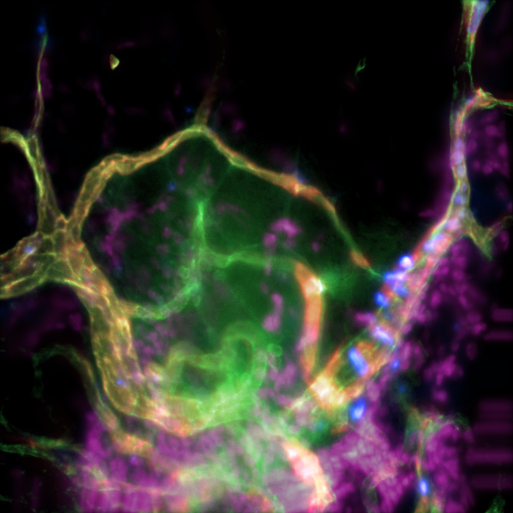}
\caption{Registration result from \cref{Fig:MicroscopyImages} showing additional channels
combining temporal information from two-dimensional intravital microscopy
with well-resolved spatial information from three-dimensional confocal microscopy images acquired after tissue excision.
Magenta shows fat cells (from confocal microscopy), blue shows leukocytes (from intravital microscopy).}
\label{fig:finalGoal}
\end{figure}

The centre region of the reference image was obscured by diffused fluorescence dye, interfering with the registration process. As a consequence, the data term
had to be augmented by a mask $m:\domainPln \to (0,1]$ taking small values in the degraded image region,
\begin{align*}
 \J^d[y] =\int_{\domainPln}m(x)\, d(\F( \imgVol \circ y)(x), \imgPln(x) ) \ \d x\,.
\end{align*}
As can be seen from overlaying $\imgPln$ and $\F(\imgVol\circ y)$ in \cref{Fig:MicroscopyImages}, right, the \notinclude{in-plane }alignment of the blood vessel structures is satisfactory.
Note that projecting the original, undeformed three-dimensional configuration, as shown in \cref{Fig:MicroscopyImages} left,
one obtains dark regions on the right middle part of the image, where vessels lie outside the focus plane.
This is corrected by the registration so that the overall registration result on the right of \cref{Fig:MicroscopyImages} shows both a strong red and green signal in that region.
Similarly, the in-plane distortion on the left side of the image is corrected by the registration.
The alignment of the three-dimensional with the two-dimensional images
allows information of other colour channels to be integrated into the single dataset as shown in \cref{fig:finalGoal}.
Thereby one can combine temporal information from intravital microscopy
with well-resolved spatial information from confocal microscopy that can only be obtained after tissue excision.
Here, clusters of fat cells that surround the larger blood vessels were stained after tissue excision and are shown in magenta,
while individual migrating leukocytes were observed during intravital microscopy and are shown in blue.

\subsection*{Acknowledgements}
The authors thank Konrad Buscher, Jian Song, and Lydia Sorokin for discussions relating to the biological motivation and for providing the data of \cref{Fig:MicroscopyImages}.
This work was supported by the Deutsche Forschungsgemeinschaft (DFG), within the Cells-in-Motion Cluster of Excellence (EXC 1003-CiM), University of M\"unster, Germany,
and under Germany's Excellence Strategy EXC 2044 -- 390685587, Mathematics M\"unster: Dynamics-Geometry-Structure.
The research was further supported by the Alfried  Krupp  Prize  for  Young  University  Teachers  awarded  by  the  Alfried  Krupp  von  Bohlen  und Halbach-Stiftung.

\bibliographystyle{plain}
\bibliography{biblio}

\end{document}